\newcounter{theorem}
\renewcommand{\thetheorem}{\arabic{section}.\arabic{theorem}}
\newenvironment{thm}[1]{\par
\begin{sloppypar}\refstepcounter{theorem}%
\noindent{\bf #1 \thetheorem.}\it{}}{\end{sloppypar}}
\newenvironment{theorem}{\begin{thm}{Theorem}}{\end{thm}}
\newenvironment{proposition}{\begin{thm}{Proposition}}{\end{thm}}
\newenvironment{lemma}{\begin{thm}{Lemma}}{\end{thm}}
\newenvironment{defi}[1]{\par
\begin{sloppypar}\refstepcounter{theorem}%
\noindent{\bf #1 \thetheorem.}\rm{}}{\end{sloppypar}}
\newenvironment{definition}{\begin{defi}{Definition}}{\end{defi}}
\newenvironment{remark}{\begin{defi}{Remark}}{\end{defi}}
\newenvironment{hypothesis}{\begin{defi}{Hypothesis}}{\end{defi}}
\newcommand{\eh}{\hfill}\newlength{\sperr}
\newenvironment{proof}{{\settowidth{\sperr}{\rm Proof}
\par\addvspace{0.3cm}\noindent\parbox[t]{1.3\sperr}{\rm P\eh r\eh o\eh o\eh
f\eh.}}}{\nopagebreak\mbox{}\hfill $\blacksquare $\par\addvspace{0.25cm}}
\newenvironment{proof_}{{\settowidth{\sperr}{\rm Proof}
\par\addvspace{0.3cm}\noindent\parbox[t]{1.3\sperr}{\rm P\eh r\eh o\eh o\eh
f\eh}}}{\nopagebreak\mbox{}\hfill $\blacksquare $\par\addvspace{0.25cm}}
\def\R{{\rm I\kern-.2em R}}
\def\dbar{\;\;\bar{}\!\!\!d}
\def\Ker{\text{\rm Ker}}
\begin{document}

\title{Eigenfunctions decay for magnetic pseudodifferential operators}

\date{\today}

\author{Viorel Iftimie\footnote{Institute
of Mathematics Simion Stoilow of the Romanian Academy, P.O.  Box
1-764, Bucharest, Romania.} \ \ and Radu
Purice\footnote{Institute
of Mathematics Simion Stoilow of the Romanian Academy, P.O.  Box
1-764, Bucharest, Romania.}
\footnote{Laboratoire Europ\'een Associ\'e CNRS Franco-Roumain {\it Math-Mode}}}

\maketitle

\begin{abstract}

We prove rapid decay (even exponential decay under some stronger assumptions) of the eigenfunctions associated to discrete eigenvalues, for a class of self-adjoint operators in $L^2(\mathbb{R}^d)$ defined by ``magnetic'' pseudodifferential operators (studied in \cite{IMP1}). This class contains the relativistic Schr\"{o}dinger operator with magnetic field.

\end{abstract}

\section{Introduction}

Making a complete list of the papers devoted to the problem of exponential decay of the eigenfunctions of elliptic differential operators being quite impossible and shurely out of the aim of this paper, let us just mention the basic paper by Agmon \cite{Ag} and the one by Rabier \cite{Ra} that has influenced our paper and that also contain many other references. For Schr\"{o}dinger operators with magnetic fields such results have been obtained in \cite{HN,I,Sh}, etc. There exist also some results concerning the exponential decay of eigenfunctions for some  pseudodifferential operators (without magnetic fields), as for example the relativistic Schr\"{o}dinger operator \cite{N,CMS,HP} or the Kac operator \cite{H}.

In this paper we prove rapid decay (and under some more assumptions even exponential decay) for the eigenfunctions associated to isolated finite multiplicity eigenvalues of the self-adjoint realizations in $L^2(\mathbb{R}^d)$ of the ``magnetic'' pseudodifferential operators introduced in a series of papers \cite{KO1,KO2,Mu,MP1} and thoroughly studied in \cite{IMP1}. In order to formulate our results and their proofs we shall need to quikly recall some facts from this last paper.

Let us begin by formulating our assumptions concerning the magnetic field $B$. We denote by $BC^\infty(\mathbb{R}^n)$ the space of infinitely differentiable functions bounded together with all their derivatives
$$
BC^\infty(\mathbb{R}^n):=\left\{f\in C^\infty(\mathbb{R}^n)\mid\partial^\alpha f\in L^\infty(\mathbb{R}^n)\forall\alpha\in\mathbb{N}^n\right\}.
$$
We shall always assume that
\begin{equation}\label{Hyp-B}
 B=\frac{1}{2}\sum_{1\leq j,k\leq d}B_{jk}\,dx_j\wedge dx_k,\quad dB=0,\quad B_{jk}=-B_{kj}\in BC^\infty(\mathbb{R}^d).
\end{equation}

Let us recall that using the ``transversal'' gauge one can easily define a vector potential $A$ such that
\begin{equation}\label{Def-A}
 A=\sum_{1\leq j\leq d}A_j\,dx_j,\quad A_j\in C^\infty_{\text{\sf pol}}(\mathbb{R}^d),\quad B=dA
\end{equation}
with $C^\infty_{\text{\sf pol}}(\mathbb{R}^d)$ the space of infinitely differentiable functions with at most polynomial growth together with all their derivatives. We shall use the notation
\begin{equation}\label{Def-omega}
 \omega^A(x,y):=\exp\left(-i\int_{[x,y]}A\right),\qquad\forall(x,y)\in\mathbb{R}^d\times\mathbb{R}^d.
\end{equation} 

The above cited papers propose a gauge covariant formalism for associating a quantum observable $\mathfrak{Op}^A(a)$ to any classical observable $a:\mathbb{R}^d\times\mathbb{R}^d\rightarrow\mathbb{R}$. For $B=0$ this formalism reduces to the usual Weyl quantization procedure. We shall call the operators $\mathfrak{Op}^A(a)$ {\it magnetic pseudodifferential operators}. Usually we shall suppose that $a$ is a H\"{o}rmander type symbol. Let us recall their definition.
\begin{definition}\label{Def-H-symb}
 For any $m\in\mathbb{R}$ we consider the complex linear space 
$$
S^m(\mathbb{R}^N\times\mathbb{R}^n):=\left\{a\in C^\infty(\mathbb{R}^N\times\mathbb{R}^n)\mid\underset{(x,\xi)\in\mathbb{R}^N\times\mathbb{R}^n}{\sup}<\xi>^{-m+|\beta|}\left|\big(\partial^\alpha_x\partial^\beta_\xi a\big)(x,\xi)\right|<\infty,\ \forall\alpha\in\mathbb{N}^N,\,\forall\beta\in\mathbb{N}^n\right\}
$$
where $<\xi>:=\sqrt{1+|\xi|^2}$ for any $\xi\in\mathbb{R}^n$. This linear space is endowed with its natural Fr\'{e}chet topology given by the seminorms appearing in the above definition. We shall use the shorthand notation $S^m(\mathbb{R}^d)\equiv S^m(\mathbb{R}^d\times\mathbb{R}^d)$.
\end{definition}

For any $a\in S^m(\mathbb{R}^d)$ one has that $\mathfrak{Op}^A(a)$ belongs to $\mathbb{B}\big(\mathcal{S}(\mathbb{R}^d)\big)$ and by duality also to $\mathbb{B}\big(\mathcal{S}^\prime(\mathbb{R}^d)\big)$ and is explicitely given by the following oscillating integral
\begin{equation}\label{Def-OpA}
 \left[\mathfrak{Op}^A(a)u\right](x):=\int_{\mathbb{R}^{2d}}e^{i<x-y,\eta>}\omega^A(x,y)a\left(\frac{x+y}{2},\eta\right)u(y)\,dy\,\dbar\eta,\quad\forall x\in\mathbb{R}^d,
\end{equation}
where $u\in\mathcal{S}(\mathbb{R}^d)$ and $\dbar\eta:=(2\pi)^{-d}d\eta$.

In particular, if $a$ is a polynomial in the second variable $\xi$ then $\mathfrak{Op}^A(a)$ is a differential operator; for example if $a(x,\xi)=|\xi|^2$, then $\mathfrak{Op}^A(a)$ is the Schr\"{o}dinger operator with magnetic field $B$.

If $a\in S^m(\mathbb{R}^d)$ and $b\in S^{m^\prime}(\mathbb{R}^{d})$ then there exists a unique symbol $a\sharp^Bb\in S^{m+m^\prime}(\mathbb{R}^d)$ such that $\mathfrak{Op}^A(a\sharp^Bb)=\mathfrak{Op}^A(a)\mathfrak{Op}^A(b)$ and the map
$$
S^m(\mathbb{R}^d)\times S^{m^\prime}(\mathbb{R}^{d})\ni(a,b)\mapsto a\sharp^Bb\in S^{m+m^\prime}(\mathbb{R}^d)
$$
is continuous.

For any $s\in\mathbb{R}$ we denote by $p_s(\eta):=<\eta>^s,\,\forall\eta\in\mathbb{R}^d$. Then $p_s\in S^s(\mathbb{R}^d)$ and we define $P_s:=\mathfrak{Op}^A(p_s)$. For $s\geq0$ we define
\begin{equation}\label{Def-sp-Sob}
 \mathcal{H}^s_A(\mathbb{R}^d):=\left\{u\in L^2(\mathbb{R}^d)\mid P_su\in L^2(\mathbb{R}^d\right\},
\end{equation} 
that is a complex Hilbert space for the natural scalar product:
\begin{equation}\label{Def-pr-sc-Sob}
 \left<u,v\right>_{\mathcal{H}^s_A}:=\left<u,v\right>_{L^2}+\left<P_su,P_sv\right>_{L^2}.
\end{equation}
For $s<0$ we denote by $\mathcal{H}^s_A(\mathbb{R}^d)$ the dual of $\mathcal{H}^{-s}_A(\mathbb{R}^d)$ and by $\mathcal{H}^{\infty}_A(\mathbb{R}^d)$ the intersection $\underset{s\in\mathbb{R}}{\bigcap}\mathcal{H}^s_A(\mathbb{R}^d)$. In \cite{IMP1} these spaces are called {\it magnetic Sobolev spaces}.

The following statements are true:
\begin{itemize}
 \item The space $\mathcal{S}(\mathbb{R}^d)$ is densely embeded in $\mathcal{H}^s_A(\mathbb{R}^d)$ for any $s\in\mathbb{R}$.
\item For $s_1\leq s_2$ we have that the space $\mathcal{H}^{s_2}_A(\mathbb{R}^d)$ is continuously embedded in $\mathcal{H}^{s_1}_A(\mathbb{R}^d)$.
\item For $s\in\mathbb{N}$ we have the equality
\begin{equation}\label{sp-Sob-intregi}
 \mathcal{H}^s_A(\mathbb{R}^d)=\left\{u\in L^2(\mathbb{R}^d)\mid (D-A)^\alpha u\in L^2(\mathbb{R}^d)\ \forall\alpha\in\mathbb{N}^d,\,|\alpha|\leq s\right\}
\end{equation}
where $(D-A)^\alpha=(D_1-A_1)^{\alpha_1}\cdots(D_d-A_d)^{\alpha_d}$ for $\alpha=(\alpha_1,\ldots,\alpha_d)$.
\item For $a\in S^m(\mathbb{R}^d)$ we have that $\mathfrak{Op}^A(a)$ is a bounded operator from $\mathcal{H}^s_A(\mathbb{R}^d)$ to $\mathcal{H}^{s-m}_A(\mathbb{R}^d)$ for any $s\in\mathbb{R}$ and the map
$$
S^m(\mathbb{R}^d)\ni a\mapsto\mathfrak{Op}^A(a)\in\mathbb{B}\big(\mathcal{H}^s_A(\mathbb{R}^d);\mathcal{H}^{s-m}_A(\mathbb{R}^d)\big)
$$
is continuous.
\end{itemize}

\begin{hypothesis}\label{Hyp-symb}
 We shall always assume that  $a\in S^m(\mathbb{R}^d)$ is a real valued function and for $m>0$ we shall suppose it to be elliptic also, i.e. we suppose that there exist two positive constants $C$ and $R$ such that 
$$
\left|a(x,\xi)\right|\geq C<\xi>^m\ \forall(x,\xi)\in\mathbb{R}^d\times\mathbb{R}^d \text{ with }|\xi|\geq R.
$$
\end{hypothesis}
\begin{remark}\label{Rem-symb}
 Under the above Hypothesis it is proved in \cite{IMP1} that $\mathfrak{Op}^A(a)$ considered as linear operator in $L^2(\mathbb{R}^d)$ with domain $\mathcal{S}(\mathbb{R}^d)$ is essentialy self-adjoint. Its closure that we shall denote by $H$ is a self-adjoint operator with domain $D(H)=L^2(\mathbb{R}^d)$ for $m\leq0$ and respectively $D(H)=\mathcal{H}^m_A(\mathbb{R}^d)$ for $m>0$. In this last case the topology of $\mathcal{H}^m_A(\mathbb{R}^d)$ coincides with the graph-norm of $H$.
\end{remark}

The first Theorem of this paper states that the eigenfunctions associated to the discrete spectrum of $H$ have rapid decay.
\begin{theorem}\label{Main-1}
 Let us suppose that Hypothesis \ref{Hyp-symb} is verified and that the magnetic field verifies (\ref{Hyp-B}) and let us denote by $H$ the self-adjoint operator in $L^2(\mathbb{R}^d)$ associated to $\mathfrak{Op}^A(a)$ as above (Remark \ref{Rem-symb}). Let $\lambda\in\sigma_{\text{disc}}(H)$ and $u\in\Ker(H-\lambda)$. Then
\begin{enumerate}
 \item $<x>^pu\in\underset{n\in\mathbb{N}}{\bigcap}D(H^n)\ \forall p\in\mathbb{N}$.
\item If $m>0$ or if $m<0$ and $\lambda\ne0$ then $u\in\mathcal{S}(\mathbb{R}^d)$.
\end{enumerate}
\end{theorem}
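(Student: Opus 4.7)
The plan is to prove (1) by a commutator/bootstrap argument with smoothly truncated polynomial weights, and then to deduce (2) by combining the polynomial decay with elliptic regularity in the magnetic calculus. For (1), fix $p\in\mathbb{N}$ and a cutoff $\chi\in C_c^\infty([0,\infty))$ equal to $1$ near the origin, and set
$$
w_{p,\epsilon}(x):=\langle x\rangle^p\,\chi(\epsilon\langle x\rangle),\qquad\epsilon\in(0,1].
$$
Each $w_{p,\epsilon}$ lies in $BC^\infty(\mathbb{R}^d)$ and so, viewed as a $\xi$-independent symbol, in $S^0(\mathbb{R}^d)$; crucially, $|\partial^\alpha w_{p,\epsilon}|\leq C_\alpha\, w_{p-|\alpha|,\epsilon}$ uniformly in $\epsilon$, so each $x$-derivative strictly lowers the weight. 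From $H^nu=\lambda^nu\in L^2$ we already have $u\in\bigcap_n D(H^n)$ as the base case $p=0$, and the plan is to prove by induction on $p$ that $w_{p,\epsilon}u$ stays bounded in every $D(H^n)$ uniformly in $\epsilon$, then conclude by the weak limit $\epsilon\searrow 0$.

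The inductive step exploits $(H-\lambda)(w_{p,\epsilon}u)=[H,w_{p,\epsilon}]u$ and, by iteration,
$$
(H-\lambda)^n(w_{p,\epsilon}u)=\underbrace{\bigl[H,[H,\ldots[H,w_{p,\epsilon}]\ldots]\bigr]}_{n\text{ nested commutators}}\,u.
$$
By the magnetic $\sharp^B$-expansion of \cite{IMP1}, the $n$-fold nested commutator is a magnetic pseudodifferential operator of order $n(m-1)$ whose $x$-profile is controlled by $w_{p-n,\epsilon}$ uniformly in $\epsilon$, because each bracket with $H$ absorbs one $x$-derivative of the weight (a power $\langle x\rangle^{-1}$) and one $\xi$-derivative of $a$ (one order of regularity in $\xi$). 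Hence $\|(H-\lambda)^n(w_{p,\epsilon}u)\|_{L^2}$ is dominated, uniformly in $\epsilon$, by norms of $w_{p-n,\epsilon}u$ in suitable magnetic Sobolev spaces, which are finite by the induction hypothesis at weight $p-n<p$. Since $\lambda\in\sigma_{\text{disc}}(H)$, the operator $H-\lambda$ is boundedly invertible off the finite-dimensional range of the Riesz projector $\Pi$ onto $\Ker(H-\lambda)$; and $\Pi$ itself preserves rapid decay by applying the induction step at level $p-1$ to an orthonormal basis of its range. This yields $\|w_{p,\epsilon}u\|_{D(H^n)}\leq C_{p,n}$ uniformly in $\epsilon$, and weak compactness together with $w_{p,\epsilon}\to\langle x\rangle^p$ pointwise produces (1).

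For (2), when $m>0$ the ellipticity in Hypothesis \ref{Hyp-symb} and the magnetic parametrix give $D(H^n)=\mathcal{H}^{nm}_A(\mathbb{R}^d)$ for every $n$, so $\bigcap_n D(H^n)=\mathcal{H}^\infty_A$; combined with (1), $\langle x\rangle^pu\in\mathcal{H}^\infty_A$ for every $p$. Writing $D_j=(D_j-A_j)+A_j$ with $A_j\in C^\infty_{\text{\sf pol}}(\mathbb{R}^d)$ and iterating converts magnetic derivatives into ordinary ones at the price of polynomial $x$-factors, absorbed by the rapid decay, which yields $u\in\mathcal{S}(\mathbb{R}^d)$. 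When $m<0$ and $\lambda\neq 0$, the symbol $a-\lambda\in S^0$ is elliptic in $S^0$ (since $a(x,\xi)\to 0$ as $|\xi|\to\infty$ while $|\lambda|>0$), and a magnetic parametrix furnishes $\mathfrak{Op}^A(b)(H-\lambda)=\mathrm{I}+R$ with $b\in S^0$ and $R\in\mathfrak{Op}^A\bigl(S^{-\infty}(\mathbb{R}^d)\bigr)$; then $u=-Ru\in\mathcal{H}^\infty_A$, and the same commutator trick applied to $R$ upgrades this to $\langle x\rangle^pu\in\mathcal{H}^\infty_A$ for every $p$, again forcing $u\in\mathcal{S}$. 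The chief technical obstacle is the uniform-in-$\epsilon$ commutator estimate in the first step: one needs a form of the magnetic Moyal calculus in which $\xi$-independent, $x$-polynomially bounded symbols behave well, each derivative strictly lowering the $x$-weight; this relies on the transversal-gauge representation of $\omega^A$ and the detailed asymptotic structure of the $\sharp^B$-expansion developed in \cite{IMP1}.
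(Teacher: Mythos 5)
Your proposal takes a genuinely different route from the paper. The paper's proof reduces everything to an abstract spectral perturbation argument (Proposition \ref{P-1_1}): one shows that the conjugated operator $H_\epsilon := f_\epsilon H f_\epsilon^{-1}$ equals $H+\epsilon R_\epsilon$ with $R_\epsilon$ uniformly $H$-bounded (Lemma \ref{L-2_2}), so that for small $\epsilon$ the operator $H_\epsilon$ has $\lambda$ as an isolated eigenvalue of the same algebraic multiplicity; the associated Riesz spectral subspace $\mathcal{M}_\epsilon\subset\bigcap_n D(H^n)$ is then exactly $f_\epsilon\cdot\Ker(H-\lambda)$, which yields the decay at once. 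You instead attempt a direct nested-commutator bootstrap with truncated weights $w_{p,\epsilon}$, identity $(H-\lambda)^n(w_{p,\epsilon}u)=\mathrm{ad}_H^n(w_{p,\epsilon})u$, and Fredholm inversion of $(H-\lambda)^n$ off the Riesz projector. The paper's conjugation idea has the advantage of being packaged into a reusable abstract lemma that also drives Theorems \ref{Main-2} and \ref{Main-3}; your approach, if it worked, would be more elementary but must be re-run for each weight.

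There is, however, a genuine gap in your key technical input. You assert that $|\partial^\alpha w_{p,\epsilon}|\le C_\alpha\,w_{p-|\alpha|,\epsilon}$ uniformly in $\epsilon$, and you call this ``crucial,'' because the whole nested-commutator estimate (and the claim that $\mathrm{ad}_H^n(w_{p,\epsilon})$ has $x$-profile controlled by $w_{p-n,\epsilon}$) rests on it. This bound is false for your weight $w_{p,\epsilon}(x)=\langle x\rangle^p\chi(\epsilon\langle x\rangle)$ with $\chi\in C_c^\infty$. Differentiating produces a term $\epsilon\langle x\rangle^{p}\chi'(\epsilon\langle x\rangle)\,x_j/\langle x\rangle$, which on the support of $\chi'$ is of size $\langle x\rangle^{p-1}|\chi'(\epsilon\langle x\rangle)|$; but there $\chi(\epsilon\langle x\rangle)$ can be arbitrarily small compared with $|\chi'(\epsilon\langle x\rangle)|$ (any $\chi\in C_c^\infty$ has unbounded logarithmic derivative near the boundary of its support), so the ratio $|\partial w_{p,\epsilon}|/w_{p-1,\epsilon}$ is not uniformly bounded. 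Consequently your uniform-in-$\epsilon$ commutator estimates do not follow. The fix is essentially what the paper does: drop the compactly supported cutoff and use $f_\epsilon(x)=\langle\epsilon x\rangle^p$, for which each $x$-derivative produces an explicit factor $\epsilon$ together with a function of class $BC^\infty$ uniformly in $\epsilon$ (this is the computation in the proof of Lemma \ref{L-2_2}), and then recover $\langle x\rangle^p u\in D(H^n)$ from $\langle\epsilon x\rangle^p u\in D(H^n)$ by noting that $\langle x\rangle^p/\langle\epsilon x\rangle^p$ and its inverse lie in $BC^\infty(\mathbb{R}^d)$ for each fixed $\epsilon>0$. A secondary concern: your handling of the finite-rank part $\Pi w_{p,\epsilon}u$ via ``applying the induction step at level $p-1$ to an orthonormal basis'' is too compressed—to bound $\langle w_{p,\epsilon}u,v_j\rangle$ you need to split the weight between $u$ and $v_j$, and the resulting joint induction on the pair (weight exponent, Sobolev order) is not set up. Your treatment of point 2, both for $m>0$ and for $m<0$, $\lambda\neq0$ (via an $S^0$-elliptic parametrix for $a-\lambda$), is sound and closely parallel to the paper's.
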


In order to obtain exponential decay for the eigenfunctionswe shall need to add a hypothesis implying the existence of an analytic extension of the function $a(x,\cdot)$, for any $x\in\mathbb{R}^d$, to the domain $D_\delta\in\mathbb{C}^d$, where for $\delta>0$ we denote by
\begin{equation}\label{Def-olo-band}
 D_\delta:=\left\{\zeta=(\zeta_1,\ldots,\zeta_d)\in\mathbb{C}^d\mid |\Im\zeta_j|<\delta,\,1\leq j\leq d\right\}.
\end{equation}
\begin{hypothesis}\label{Hyp-symb-olo}
Let $a\in S^m(\mathbb{R}^d)$ and suppose that there exists $\delta>0$ and a function $\widetilde{a}:\mathbb{R}^d\times D_\delta\rightarrow\mathbb{C}$ such that:
\begin{enumerate}
 \item for any $x\in\mathbb{R}^d$ the function $\widetilde{a}(x,\cdot):D_\delta\rightarrow\mathbb{C}$ is analytic;
\item the map $\mathbb{R}^d\times\mathbb{R}^d\ni(x,\eta)\mapsto\widetilde{a}(x,\eta+i\xi)\in\mathbb{C}$ is of class $S^m(\mathbb{R}^d)$ uniformly (for the Fr\'{e}chet topology) with respect to $\xi=(\xi_1,\ldots,\xi_d)\in\mathbb{R}^d$ for $|\xi_j|<\delta$, $1\leq j\leq d$;
\item we have: $a=\left.\widetilde{a}\right|_{\mathbb{R}^d\times\mathbb{R}^d}$.
\end{enumerate}
\end{hypothesis}

\begin{remark}
 Using the Cauchy formula for a polydisc, one can easily prove that if $a(x,\cdot)$ has an analytic extension to a ``conic'' neighbourhood $\Gamma_\delta$ of $\mathbb{R}^d\subset\mathbb{C}^d$ defined as
$$
\Gamma_\delta:=\left\{\zeta=(\zeta_1,\ldots,\zeta_d)\in\mathbb{C}^d\mid |\Im\zeta_j|<\delta<\Re \zeta>,\,1\leq j\leq d\right\},
$$
then Hypothesis \ref{Hyp-symb-olo} is a consequence of the following simpler hypothesis:
\end{remark}
\begin{hypothesis}\label{Hyp-symb-olo-2}
Let $a\in S^m(\mathbb{R}^d)$ and suppose that there exist $\delta>0$ and a function $\widetilde{a}\in C^\infty(\mathbb{R}^d\times \Gamma_\delta)$ such that:
\begin{enumerate}
 \item for any $x\in\mathbb{R}^d$ the function $\widetilde{a}(x,\cdot):D_\delta\rightarrow\mathbb{C}$ is analytic on $\Gamma_\delta$;
\item for any $\alpha\in\mathbb{N}^d$ there exists a constant $C_\alpha>0$ such that $\left|\big(\partial^\alpha_x\widetilde{a}\big)(x,\xi)\right|\leq\,C_\alpha<\Re\zeta>^m$ on $\mathbb{R}^d\times \Gamma_\delta$;
\item we have: $a=\left.\widetilde{a}\right|_{\mathbb{R}^d\times\mathbb{R}^d}$.
\end{enumerate}
\end{hypothesis}
Evidently the symbols $a$ that are polynomials in the second variable with coefficients of class $BC^\infty(\mathbb{R}^d)$ and the symbols $p_s$ (for $s\in\mathbb{R}$) satisfy Hypothesis \ref{Hyp-symb-olo-2}.

\begin{theorem}\label{Main-2}
 Let us suppose that Hypothesis \ref{Hyp-symb}and \ref{Hyp-symb-olo} are verified and that the magnetic field verifies (\ref{Hyp-B}). Let us denote by $H$ the self-adjoint operator in $L^2(\mathbb{R}^d)$ associated to $\mathfrak{Op}^A(a)$ as above (Remark \ref{Rem-symb}). Let $\lambda\in\sigma_{\text{disc}}(H)$ and $u\in\Ker(H-\lambda)$. Then there exists $\epsilon_0>0$ such that for any $\epsilon\in(0,\epsilon_0]$ we have that
\begin{enumerate}
 \item $e^{\epsilon<x>}u\in\underset{n\in\mathbb{N}}{\bigcap}D(H^n)$.
\item If $m>0$ or if $m<0$ and $\lambda\ne0$ then $e^{\epsilon<x>}u\in\mathcal{S}(\mathbb{R}^d)$.
\end{enumerate}
\end{theorem}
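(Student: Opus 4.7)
The approach is a Combes--Thomas/Rabier analytic deformation, implemented within the magnetic Weyl calculus of \cite{IMP1}. The key construction is the following: for each $\rho\in C^\infty(\mathbb{R}^d)$ with uniformly bounded derivatives of all orders and $\|\nabla\rho\|_\infty\leq 1$, and for each small $\epsilon>0$, Hypothesis~\ref{Hyp-symb-olo} permits a contour shift $\eta\mapsto\eta+i\epsilon\Phi(x,y)$ in the oscillatory integral (\ref{Def-OpA}) defining $\mathfrak{Op}^A(a)$, where $\Phi(x,y):=\int_0^1\nabla\rho(y+t(x-y))\,dt$. The resulting amplitude can be reduced to a Weyl-type symbol of class $S^m(\mathbb{R}^d)$ by the magnetic symbol calculus, yielding a ``shifted'' operator $H_\epsilon^\rho$ with the following properties: (a)~$H_\epsilon^\rho$ is closed in $L^2(\mathbb{R}^d)$ with the same domain as $H$; (b)~$H_\epsilon^\rho-H$ is bounded $\mathcal{H}^m_A\to L^2$ with norm $O(\epsilon)$, uniformly in $\rho$ in the above class; (c)~whenever $\rho$ is bounded, $H_\epsilon^\rho=e^{\epsilon\rho}He^{-\epsilon\rho}$ as operators on $L^2$.

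Fix a circle $\gamma\subset\mathbb{C}$ around $\lambda$ isolating it in $\sigma(H)$. The relative $O(\epsilon)$ bound together with Kato's holomorphic perturbation theory yields $\epsilon_0>0$ (independent of $\rho$ in the above class) such that, for $\epsilon\in[0,\epsilon_0]$, the resolvent $(H_\epsilon^\rho-z)^{-1}$ exists and is uniformly bounded on $\gamma$; the Riesz projection $P_\epsilon^\rho:=-\frac{1}{2\pi i}\oint_\gamma(H_\epsilon^\rho-z)^{-1}\,dz$ is then of rank $k:=\dim\Ker(H-\lambda)$ and continuous in $\epsilon$, with $P_\epsilon^\rho=e^{\epsilon\rho}Pe^{-\epsilon\rho}$ for bounded $\rho$ (where $P$ is the Riesz projection of $H$ at $\lambda$). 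Now pick a truncation $\rho_N\in BC^\infty(\mathbb{R}^d)$ with $\rho_N\nearrow\langle\cdot\rangle$ pointwise, $\rho_N\leq N$, and $\|\partial^\alpha\rho_N\|_\infty$ bounded uniformly in $N$ for every $\alpha\ne 0$. Then $v_N:=e^{\epsilon\rho_N}u\in L^2$ (bounded weight), and $v_N=e^{\epsilon\rho_N}(Pu)=P_\epsilon^{\rho_N}v_N$ lies in $\mathrm{range}\,P_\epsilon^{\rho_N}$.

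Dominated convergence of the symbols, combined with the uniform $S^m$-bound, yields norm-resolvent convergence $H_\epsilon^{\rho_N}\to H_\epsilon^{\langle\cdot\rangle}$ and hence $P_\epsilon^{\rho_N}\to P_\epsilon^{\langle\cdot\rangle}$ in $\mathbb{B}(L^2)$; the $k$-dimensional ranges accordingly converge in the gap topology. Assume for contradiction that $\|v_N\|_{L^2}\to\infty$ along a subsequence, and normalize $\hat v_N:=v_N/\|v_N\|$; by the gap convergence, a further subsequence gives $\hat v_N\to\hat v$ in $L^2$ with $\hat v\in\mathrm{range}\,P_\epsilon^{\langle\cdot\rangle}$ and $\|\hat v\|=1$. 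But by Theorem~\ref{Main-1}(1), $u$ has rapid decay, so $e^{\epsilon\rho_N(x)}u(x)$ stays finite for every $x$ while $\|v_N\|\to\infty$, so $\hat v_N(x)\to 0$ pointwise a.e., forcing $\hat v=0$---a contradiction. Hence $\sup_N\|v_N\|<\infty$, and monotone convergence gives $e^{\epsilon\langle\cdot\rangle}u\in L^2$. Since this function is an eigenfunction of $H_\epsilon^{\langle\cdot\rangle}$ at $\lambda$, elliptic regularity for the magnetic PDO $H_\epsilon^{\langle\cdot\rangle}$ (inheriting the ellipticity of $a$) places it in $\mathcal{H}^\infty_A=\bigcap_n D(H^n)$, proving Part~1. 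Part~2 follows by applying Part~1 with some $\epsilon_1\in(\epsilon,\epsilon_0]$ and combining $e^{\epsilon_1\langle\cdot\rangle}u\in\mathcal{H}^\infty_A$ with Theorem~\ref{Main-1}(2), using the gain factor $e^{-(\epsilon_1-\epsilon)\langle\cdot\rangle}$ (together with magnetic Sobolev embedding to pass to $L^\infty$ bounds on derivatives) to bound every Schwartz seminorm of $e^{\epsilon\langle\cdot\rangle}u$.

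The principal obstacle is the symbol-calculus construction in the first step: showing that conjugation by $e^{\epsilon\rho}$ does yield a magnetic pseudodifferential operator whose Weyl symbol lies in $S^m$ with the correct dependence on $\epsilon$ and $\rho$, and that $\rho\mapsto H_\epsilon^\rho$ is continuous enough to deliver the norm-resolvent convergence needed in the contradiction step. The crux is the contour shift in the defining oscillatory integral and its reduction to a standard symbol; Hypothesis~\ref{Hyp-symb-olo}(2)---that $\widetilde a(\cdot,\cdot+i\xi)$ lies in $S^m$ uniformly for $|\xi_j|<\delta$---is precisely what justifies the shift and controls the resulting symbol in every relevant Fr\'echet seminorm.
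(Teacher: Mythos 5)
Your approach identifies the right technical engine---an analytic contour shift $\eta\mapsto\eta+i\epsilon\Phi$ in the defining oscillatory integral, justified by Hypothesis~\ref{Hyp-symb-olo}(2)---and this is indeed the heart of Lemmas~\ref{L-3_2}--\ref{L-3_4} in the paper. However, the way you organize the argument around bounded truncations $\rho_N\nearrow\langle\cdot\rangle$ and a subsequent limiting procedure introduces a genuine gap.

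The assertion that ``dominated convergence of the symbols, combined with the uniform $S^m$-bound, yields norm-resolvent convergence $H_\epsilon^{\rho_N}\to H_\epsilon^{\langle\cdot\rangle}$'' is not justified. The amplitudes $c_\epsilon^{\rho_N}(x,y,\eta)=\widetilde a\big(\tfrac{x+y}{2},\eta+i\epsilon\Phi_N(x,y)\big)$ converge to $c_\epsilon^{\langle\cdot\rangle}$ only pointwise (and locally uniformly with all derivatives), not in the Fr\'echet topology of $S^{m-1}(\mathbb{R}^{2d}\times\mathbb{R}^d)$: since $\nabla\rho_N-\nabla\langle\cdot\rangle$ does not tend to zero uniformly in $x$, the symbol difference does not become small at spatial infinity, and no seminorm of the form $\sup_{x,y,\eta}$ goes to zero. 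Pointwise convergence with a uniform $S^m$ bound gives, at best, strong resolvent convergence, and strong convergence of the rank-$k$ Riesz projections $P_\epsilon^{\rho_N}$ with uniform bounds does \emph{not} produce the compactness you invoke: if $P_N$ projects onto $\operatorname{span}\{e_N\}$ for an orthonormal sequence $(e_N)$, then $P_N\to 0$ strongly while the unit vectors $e_N\in\operatorname{range}P_N$ have no convergent subsequence. So the step ``a further subsequence gives $\hat v_N\to\hat v$ in $L^2$'' is unsupported, and the contradiction argument breaks down precisely at the point where it was meant to deliver $\sup_N\|v_N\|<\infty$.

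The paper sidesteps this issue entirely. Lemma~\ref{L-3_3} establishes the conjugation identity $f_\epsilon\mathfrak{Op}^A(a)f_\epsilon^{-1}=\mathfrak{E}(c_\epsilon)$ \emph{directly for the unbounded weight} $f_\epsilon(x)=e^{\langle\epsilon x\rangle}$: one expands $f_\epsilon(x)f_\epsilon(y)^{-1}$ in a power series (Lemma~\ref{L-3_2}), splits the series at a high enough order $N$, and uses the Cauchy estimates of Lemma~\ref{L-3_1} to show that the tail is absolutely convergent (for small $\epsilon$) and re-sums to a value of $\widetilde a$ at the shifted argument; no bounded truncation of the weight and no norm-resolvent limit are ever needed. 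Moreover, the abstract Proposition~\ref{P-1_1} is phrased so that one only needs: (i) the bounded multiplier $f_\epsilon^{-1}$ preserves $D(H)$, and (ii) $H_\epsilon:=f_\epsilon Hf_\epsilon^{-1}=H+\epsilon R_\epsilon$ as operators on $D(H)$, with $R_\epsilon$ uniformly $H$-bounded. The proposition then runs the argument ``backwards'': it identifies $\Ker(H-\lambda)$ as $f_\epsilon^{-1}\mathcal{M}_\epsilon$, where $\mathcal{M}_\epsilon$ is the range of the Riesz projection of $H_\epsilon$ around $\lambda$, so that $f_\epsilon u\in\mathcal{M}_\epsilon\subset\bigcap_n D(H^n)$ follows by construction rather than having to be verified in advance. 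This eliminates exactly the circularity you were trying to circumvent with the truncation $\rho_N$. As a side remark, the paper's proof of Theorem~\ref{Main-2} does not use the conclusion of Theorem~\ref{Main-1}, whereas your argument invokes it to bound $u$ pointwise---a logical dependency that, once the gap above is fixed, could be removed by adopting the paper's structure.
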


Results similar to the above two Theorems can also be obtained for singular perturbations of some magnetic pseudodifferential operators. Let us ilustrate this procedure on the case of operators of the form $P_1+V$ with $P_1=\mathfrak{Op}^A(p_1)$ with $p_1(\eta)=<\eta>$. Let us notice that for $B=0$ this is just the relativistic Schr\"{o}dinger operator and has been studied in \cite{CMS,HP}.

As we have already noticed the symbol $p_1$ verifies the Hypothesis \ref{Hyp-symb} and \ref{Hyp-symb-olo}. As in our Remark \ref{Rem-symb} we shall denote by $H_A$ the self-adjoint operator in $L^2(\mathbb{R}^d)$ associated to $P_1$, having the domain $\mathcal{H}^1_A(\mathbb{R}^d)$. Concerning $V$ we shall use the following Hypothesis.
\begin{hypothesis}\label{Hyp-V}
 We suppose that $V:\mathbb{R}^d\rightarrow\mathbb{R}$ has the decomposition $V=V_+-V_-$ with $V_\pm\geq0$, $V_\pm\in L^1_{\text{\sf loc}}(\mathbb{R}^d)$ and that multiplication with $V_-$ on $L^2(\mathbb{R}^d)$ is a form relatively bounded operator with respect to $H_0=\sqrt{1-\Delta}$ with relative bound strictly less then 1.
\end{hypothesis}

Now let us recall that as we have proved in \cite{IMP2}, under the Hypothesis \ref{Hyp-V} and assuming (\ref{Hyp-B}), one can define the ``form sum'' $H:=H_A\dot{+}V$. The domain of the form $h$ associated to $H$ is then 
$$
D(h)=\left\{u\in\mathcal{H}^{1/2}_A(\mathbb{R}^d)\mid V_+u\in L^2(\mathbb{R}^d)\right\}.
$$
An element $u\in D(h)$ is then also in $D(H)$ if and only if $P_1u+Vu\in L^2(\mathbb{R}^d)$, and in this case we have that $Hu=P_1u+Vu$.

Under the above Hypoyhesis \ref{Hyp-V} and assuming (\ref{Hyp-B}) one can prove $L^2$ exponential decay for the eigenfunctions of $H$ associated to the discrete spectrum. In order to obtain pointwise exponential decay one has to suppose that $V_-$ is of Kato class $\mathcal{K}_d$ associated to the operator $H_0$. Let us briefly recall its definition; following \cite{IT} and \cite{CMS} 
we consider the semigoup generated by $H_0$ that is explicitely given as convolution with the function
$$
\mathfrak{p}_t(x):=(2\pi)^{-\frac{d+1}{2}}2te^t\left(|x|^2+t^2\right)^{-\frac{d+1}{4}}K_{\frac{d+1}{2}}\left(\sqrt{|x|^2+t^2}\right),\quad\forall x\in\mathbb{R}^d,\,t>0,
$$
with $K_{\frac{d+1}{2}}$ the modified Bessel function of 3-rd type and order $\frac{d+1}{2}$. {\it Then a positive  function $W\in L^1_{\text{\sf loc}}(\mathbb{R}^d)$ belongs to $\mathcal{K}_d$ when it verifies the following equality:}
$$
\underset{t\searrow0}{\lim}\ \underset{x\in\mathbb{R}^d}{\sup}\int_0^t\left(\int_{\mathbb{R}^d}\mathfrak{p}_s(x-y)W(y)dy\right)ds\,=\,0.
$$
As proved in \cite{vCa,CMS,DvC}, if $W\in\mathcal{K}_d$, then multiplication by $W$ in $L^2(\mathbb{R}^d)$ is form relatively bounded with respect to $H_0$ with relative bound 0.
\begin{theorem}\label{Main-3}
 Under the Hypothesis \ref{Hyp-V} and assuming (\ref{Hyp-B}) let $H=H_A\dot{+}V$ (as above), let $\lambda\in\sigma_{\text{disc}}(H)$ and $u\in\Ker(H-\lambda)$. Then there exists a constant $\epsilon_0\in(0,1)$ such that for any $\epsilon\in(0,\epsilon_0]$ one has that:
\begin{enumerate}
 \item $e^{<\epsilon x>}u\in\underset{n\in\mathbb{N}}{\bigcap}D(H^n)$.
\item If $V_-\in\mathcal{K}_d$ then $e^{<\epsilon x>}u\in\ L^\infty(\mathbb{R}^d)$.
\end{enumerate}
\end{theorem}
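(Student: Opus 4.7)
The plan is to mimic the strategy underlying Theorems \ref{Main-1} and \ref{Main-2} but entirely at the quadratic form level, since the perturbation $V$ is only form-small, not a symbol. I work with the regularized weight $\varphi_{\epsilon,\tau}(x):=\langle\epsilon x\rangle/(1+\tau\langle\epsilon x\rangle)$ for $\tau>0$, which is smooth, uniformly bounded by $\tau^{-1}$, and whose first and higher derivatives are uniformly bounded in $\tau$ with $|\nabla\varphi_{\epsilon,\tau}|\leq C\epsilon$. The goal is a uniform-in-$\tau$ bound $\|e^{\varphi_{\epsilon,\tau}}u\|_{L^2}\leq C\|u\|_{L^2}$; the estimate of Theorem \ref{Main-3}(1) then follows by monotone convergence as $\tau\downarrow 0$.

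The main ingredient is the analysis of the conjugated operator $H_{A,\tau}:=e^{\varphi_{\epsilon,\tau}}P_1e^{-\varphi_{\epsilon,\tau}}$. Because $p_1(\eta)=\langle\eta\rangle$ extends analytically to $D_\delta$ for any $\delta<1$ and is of class $S^1$ on each such tube (Hypothesis \ref{Hyp-symb-olo} is satisfied), a contour shift in the oscillating integral (\ref{Def-OpA})---exactly the argument used to prove Theorem \ref{Main-2}---identifies $H_{A,\tau}$ with $\mathfrak{Op}^A(p_1^{(\tau)})$, where $p_1^{(\tau)}(x,\eta):=\widetilde{p_1}\bigl(\eta+i\nabla\varphi_{\epsilon,\tau}(x)\bigr)$. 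Hypothesis \ref{Hyp-symb-olo}(2) together with the uniform bounds on the derivatives of $\varphi_{\epsilon,\tau}$ guarantees that $p_1^{(\tau)}\in S^1(\mathbb{R}^d)$ uniformly in $\tau$ for $\epsilon<\epsilon_0<1$, and that $r_{\epsilon,\tau}:=p_1^{(\tau)}-p_1$ has $S^1$-seminorms $O(\epsilon)$. Consequently, thanks to the continuity $S^1\ni a\mapsto\mathfrak{Op}^A(a)\in\mathbb{B}(\mathcal{H}^{1/2}_A;\mathcal{H}^{-1/2}_A)$ recalled in the introduction, $H_{A,\tau}-H_A$ is a bounded form on $\mathcal{H}^{1/2}_A(\mathbb{R}^d)$ with norm $O(\epsilon)$ uniformly in $\tau$.

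Combining with Hypothesis \ref{Hyp-V}, for $\epsilon_0$ small the KLMN theorem produces a closed sectorial form $h_\tau$ on $D(h)$ and an associated m-sectorial operator $H_\tau$ with $\|H_\tau-H\|_{D(h)\to D(h)^*}=O(\epsilon)$ uniformly in $\tau$. The eigenequation $(H-\lambda)u=0$, read in form sense and multiplied by the bounded weight $e^{\varphi_{\epsilon,\tau}}$, translates to $(H_\tau-\lambda)u_\tau=0$ with $u_\tau:=e^{\varphi_{\epsilon,\tau}}u\in D(h)$. Since $\lambda\in\sigma_{\text{disc}}(H)$, analytic perturbation theory applied to the holomorphic family $\{H_\tau\}$ (of type (B) in Kato's sense) shows that, for $\epsilon_0$ small enough uniformly in $\tau$, the spectral projection $\Pi_\tau$ associated to a small circle around $\lambda$ has the same finite rank as the one for $H$, and $\|\Pi_\tau-\Pi_0\|=O(\epsilon)$. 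Hence $u_\tau\in\mathrm{Ran}\,\Pi_\tau$ has $L^2$-norm comparable to $\|\Pi_0u_\tau\|_{L^2}\leq\|u\|_{L^2}$, which is the uniform bound sought. For item (1), the inclusion $e^{\langle\epsilon x\rangle}u\in D(H^n)$ follows by iteration: the equation $H(e^{\varphi_{\epsilon,\tau}}u)=\lambda e^{\varphi_{\epsilon,\tau}}u+[e^{\varphi_{\epsilon,\tau}},H]e^{-\varphi_{\epsilon,\tau}}(e^{\varphi_{\epsilon,\tau}}u)$ is controlled by the same magnetic conjugation formula applied to $p_1^{(\tau)}$ rather than $p_1$.

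For item (2), the Kato class assumption $V_-\in\mathcal{K}_d$ guarantees, by the works cited in the introduction, that $(H+E)^{-1}$ is bounded from $L^2(\mathbb{R}^d)$ to $L^\infty(\mathbb{R}^d)$ for $E$ large enough (Feynman--Kac subordination via the semigroup kernel $\mathfrak{p}_t$). Writing
\[
e^{\varphi_{\epsilon,\tau}}u\;=\;(H+E)^{-1}e^{\varphi_{\epsilon,\tau}}(H+E)u\;+\;(H+E)^{-1}\bigl[e^{\varphi_{\epsilon,\tau}},H\bigr]u,
\]
the first term is in $L^\infty$ by the $L^2\to L^\infty$ mapping and item (1); the commutator in the second term is controlled, via the conjugation formula for $p_1$, by a first order magnetic pseudodifferential operator applied to $e^{\varphi_{\epsilon,\tau}}u\in D(H)$, whence also in $L^2$ and the whole expression in $L^\infty$. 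Letting $\tau\downarrow 0$ concludes. The main technical obstacle is the rigorous verification, uniformly in $\tau$, of the magnetic pseudodifferential conjugation formula $e^{\varphi_{\epsilon,\tau}}\mathfrak{Op}^A(p_1)e^{-\varphi_{\epsilon,\tau}}=\mathfrak{Op}^A(p_1^{(\tau)})$ in presence of the factor $\omega^A$; once this is settled (via analytic extension and contour shift, as in Theorem \ref{Main-2}), the rest reduces to standard KLMN perturbation and Kato-class smoothing.
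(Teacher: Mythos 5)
Your strategy for point (1) does not close at the step where you assert that ``$u_\tau\in\mathrm{Ran}\,\Pi_\tau$ has $L^2$-norm comparable to $\|\Pi_0u_\tau\|_{L^2}\leq\|u\|_{L^2}$.'' The first comparison (from $\|\Pi_\tau-\Pi_0\|<1$) is fine, but the second inequality is simply false: $\Pi_0u_\tau=\Pi_0\bigl(e^{\varphi_{\epsilon,\tau}}u\bigr)$, and $e^{\varphi_{\epsilon,\tau}}\geq1$ is an \emph{expansion}, not a contraction, so $\|\Pi_0\|=1$ only yields $\|\Pi_0u_\tau\|\leq\|u_\tau\|$, which is circular. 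Whether $\|\Pi_0e^{\varphi_{\epsilon,\tau}}u\|$ stays bounded as $\tau\downarrow0$ is precisely the decay statement to be proved; membership of $u_\tau$ in a fixed finite-dimensional spectral subspace provides no normalization. The paper avoids this issue entirely by conjugating in the \emph{bounded} direction, setting $H_\epsilon:=f_\epsilon Hf_\epsilon^{-1}$ with $f_\epsilon^{-1}=e^{-\langle\epsilon x\rangle}$ a bounded multiplier, and then invoking the counting argument of Proposition \ref{P-1_1} together with Lemma \ref{L-1_2}: the map $v\mapsto f_\epsilon^{-1}v$ sends the $m$-dimensional generalized eigenspace $\mathcal{M}_\epsilon=\mathrm{Ran}\,\Pi_\epsilon$ of $H_\epsilon$ injectively into $\Ker(H-\lambda)$, and since the dimensions coincide it is a bijection; hence every $u\in\Ker(H-\lambda)$ equals $f_\epsilon^{-1}v$ for some $v\in\bigcap_nD(H^n)$, i.e.\ $e^{\langle\epsilon x\rangle}u=v$. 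No $\tau$-regularization and no a priori uniform bound are needed. Your conjugation analysis of $H_{A,\tau}$ is plausible (and is essentially the paper's Lemmas \ref{L-3_3}--\ref{L-3_4}), but without the bijection argument your limiting step $\tau\downarrow0$ has nothing to hold onto.

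For point (2) the decomposition through $(H+E)^{-1}$ also fails in general: $(H+E)^{-1}$ is of order $-1$ and is \emph{not} bounded from $L^2(\mathbb{R}^d)$ to $L^\infty(\mathbb{R}^d)$ once $d\geq2$; already for $H_0=\sqrt{1-\Delta}$ the resolvent kernel behaves like $|x-y|^{-(d-1)}$ near the diagonal, which is not square-integrable there unless $d=1$. The paper sidesteps this by using the semigroup at the fixed time $t=1$ instead of the resolvent: the diamagnetic domination $|e^{-H}u|\leq e^{-H(0,-V_-)}|u|$ from \cite{IMP2}, the kernel bound $0\leq e^{-H(0,-V_-)}(x,y)\leq C_pe^{-\langle x-y\rangle/p}$ (valid because $V_-\in\mathcal{K}_d$), and the identity $u=e^\lambda e^{-H}u$ give pointwise
$|f_\epsilon(x)u(x)|\leq C_pe^\lambda\int e^{-|x-y|(p^{-1}-\epsilon)}f_\epsilon(y)|u(y)|\,dy$,
which is then finished by Cauchy--Schwarz and the $L^2$ estimate from point (1). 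You should replace your resolvent argument by a semigroup argument of this type.
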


The proofs of the above three theorems is based on the following fact, that is obvious in the case of differential operators:

{\it Either $f_\epsilon(x):=<\epsilon x>^p$ or $f_\epsilon(x):=e^{<\epsilon x>}$, for any $p\in\mathbb{N}$ and sufficiently small $\epsilon>0$, verifies: $f_\epsilon Hf_\epsilon^{-1}=H+\epsilon R_\epsilon$ with $R_\epsilon$ a $H$-relatively bounded operator uniformly for $\epsilon\searrow0$.}

This idea, that is also present in \cite{H} and \cite{Ra}, has to be combined with a perturbative argument inspired from \cite{Ra}. The proof of the point 2 of Theorem \ref{Main-3} needs also a 'diamagnetic' type inequality that we obtained in \cite{IMP2}.

The paper is structured as follows: in the first Section we obtain an abstract result about the behaviour of eigenfunctions under certain conditions and the following three sections deal each one with one of the above three Theorems.

\section{An abstract weighted estimation}

\begin{hypothesis}\label{Hyp-weight}
Let $H$ be a self-adjoint operator in $L^2(\mathbb{R}^d)$. Let $(R_\epsilon)_{\epsilon\in(0,1]}$ be a family of operators acting in $L^2(\mathbb{R}^d)$ and let $f:\mathbb{R}^d\rightarrow[1,\infty)$ be a measurable function. Let $f_\epsilon(x):=f(\epsilon x)$ for any $\epsilon>0$. We suppose that:
\begin{enumerate}
 \item the operators $R_\epsilon$ are $H$-relatively bounded uniformly in $\epsilon\in(0,1]$;
\item for any $u\in D(H)$ and $\epsilon\in(0,1]$ we have that $f_\epsilon^{-1} u\in D(H)$;
\item on $D(H)$ we have the equality $H_\epsilon:=f_\epsilon Hf_\epsilon^{-1}=H+\epsilon R_\epsilon$ for any $\epsilon\in(0,1]$.
\end{enumerate}
\end{hypothesis}

\begin{proposition}\label{P-1_1}
 Suppose that our Hypothesis \ref{Hyp-weight} is verified and suppose $\lambda\in\sigma_{\text{disc}}(H)$. Then there exists $\epsilon_0\in(0,1]$ such that for any $\epsilon\in(0,\epsilon_0]$ we have that $f_\epsilon u\in D(H^n)$ for any $n\geq1$ and any $u\in\Ker(H-\lambda)$.
\end{proposition}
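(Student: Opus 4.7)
The plan is to apply spectral perturbation theory to the closed operator $H_\epsilon:=H+\epsilon R_\epsilon$, which by the uniform $H$-relative boundedness of $R_\epsilon$ is well-defined on $D(H)$ for all sufficiently small $\epsilon$. First I would fix a positively oriented Jordan contour $\Gamma\subset\mathbb{C}$ enclosing $\lambda$ and leaving $\sigma(H)\setminus\{\lambda\}$ in its exterior. Using
$$
(z-H_\epsilon)^{-1}=(z-H)^{-1}\bigl(I-\epsilon R_\epsilon(z-H)^{-1}\bigr)^{-1},
$$
and the fact that $\sup_{z\in\Gamma}\|R_\epsilon(z-H)^{-1}\|$ is bounded independently of $\epsilon$ (a consequence of the uniform $H$-bound on $R_\epsilon$ together with the boundedness of $(z-H)^{-1}$ and $H(z-H)^{-1}$ on $\Gamma$), a Neumann series argument yields $\epsilon_0>0$ such that $\Gamma\subset\rho(H_\epsilon)$ and the resolvent is uniformly bounded on $\Gamma\times(0,\epsilon_0]$.

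Next I would introduce the Riesz projector $\Pi_\epsilon:=\frac{1}{2\pi i}\oint_\Gamma(z-H_\epsilon)^{-1}\,dz$ and its counterpart $\Pi$ for $H$. The estimate $\|\Pi_\epsilon-\Pi\|=O(\epsilon)$ implies $\Pi_\epsilon$ has the same finite rank $m:=\dim\Ker(H-\lambda)$ for small $\epsilon$, and the Dunford identities $H_\epsilon^k\Pi_\epsilon=\frac{1}{2\pi i}\oint_\Gamma z^k(z-H_\epsilon)^{-1}\,dz$, all bounded on $L^2$, place the range $E_\epsilon$ of $\Pi_\epsilon$ inside $\bigcap_{n\geq1}D(H_\epsilon^n)$. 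The intertwining relation in Hypothesis~\ref{Hyp-weight} reads $H_\epsilon f_\epsilon=f_\epsilon H$ on appropriate vectors, and applying it formally to $u$ gives $(H_\epsilon-\lambda)(f_\epsilon u)=f_\epsilon(H-\lambda)u=0$, i.e.\ $f_\epsilon u\in\Ker(H_\epsilon-\lambda)\subset E_\epsilon$. Granted this, the conclusion follows by transferring $E_\epsilon\subset\bigcap_n D(H_\epsilon^n)$ to $E_\epsilon\subset\bigcap_n D(H^n)$: on the finite-dimensional $H_\epsilon$-invariant space $E_\epsilon$ one iterates $H=H_\epsilon-\epsilon R_\epsilon$, controlling the $R_\epsilon$-terms by the uniform relative bound.

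The main obstacle is precisely the identification $f_\epsilon u\in E_\epsilon$, since $f_\epsilon u$ is not a priori an element of $L^2$. I would address it by truncation: setting $f^{(N)}:=\min(f,N)$, each $f^{(N)}_\epsilon u$ is bounded by $N|u|$ and therefore genuinely in $L^2$, with $f^{(N)}_\epsilon u\to f_\epsilon u$ pointwise as $N\to\infty$. Writing $f^{(N)}_\epsilon u=\Pi_\epsilon(f^{(N)}_\epsilon u)+(I-\Pi_\epsilon)(f^{(N)}_\epsilon u)$, using the bounded invertibility of $(H_\epsilon-\lambda)$ on $(I-\Pi_\epsilon)L^2$ and the fact that the ``defect'' $(H_\epsilon-\lambda)(f^{(N)}_\epsilon u)$ is concentrated on the set $\{x:f(\epsilon x)>N\}$ and can be controlled (via the $H$-relative boundedness of $R_\epsilon$ together with the regularity $u\in\bigcap_n D(H^n)$ coming from $Hu=\lambda u$) by a quantity that vanishes as $N\to\infty$, one extracts a uniform bound $\|f^{(N)}_\epsilon u\|_{L^2}\leq C$. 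Fatou's lemma then delivers $f_\epsilon u\in L^2$, making the formal intertwining legitimate and identifying $f_\epsilon u$ as an element of $E_\epsilon$. The rest of the proof is routine spectral calculus; the substantive work lives entirely in this truncation estimate.
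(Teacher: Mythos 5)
Your set-up (perturbed operator $H_\epsilon$, Neumann series on a contour around $\lambda$, Riesz projector of the correct rank) matches the paper's, and you correctly identify the central difficulty: $f_\epsilon u$ is not a priori in $L^2$, let alone in $D(H)=D(H_\epsilon)$, so the intertwining relation cannot simply be ``applied'' to $u$. The problem is that the truncation mechanism you propose to resolve it does not hold up. With $f^{(N)}_\epsilon:=\min(f_\epsilon,N)$ and the splitting $f^{(N)}_\epsilon u=\Pi_\epsilon(f^{(N)}_\epsilon u)+(I-\Pi_\epsilon)(f^{(N)}_\epsilon u)$, the first piece gives you only $\|\Pi_\epsilon(f^{(N)}_\epsilon u)\|\leq\|\Pi_\epsilon\|\,\|f^{(N)}_\epsilon u\|$, which is circular; and to control the second piece by inverting $H_\epsilon-\lambda$ on the range of $I-\Pi_\epsilon$, you would need to evaluate $(H_\epsilon-\lambda)(f^{(N)}_\epsilon u)$, but $f^{(N)}_\epsilon$ is only Lipschitz, so in the intended applications (where $D(H)=\mathcal{H}^m_A(\mathbb{R}^d)$) the vector $f^{(N)}_\epsilon u$ need not lie in $D(H_\epsilon)=D(H)$ at all, and the expression is undefined. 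Even if one granted the uniform $L^2$ bound and invoked Fatou, the conclusion would only be $f_\epsilon u\in L^2(\mathbb{R}^d)$, which still does not justify $(H_\epsilon-\lambda)(f_\epsilon u)=0$; a further closedness/approximation step in the graph norm would be needed and is not supplied.

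The paper avoids the issue entirely by running the map in the \emph{opposite} direction, exploiting that multiplication by $f_\epsilon^{-1}$ is a bounded operation that leaves $D(H)$ invariant (Hypothesis~\ref{Hyp-weight}(2)). First one shows (Lemma~\ref{L-1_2}) that for small $\epsilon$ the only spectrum of $H_\epsilon$ in $\overline D$ is $\lambda$ itself, with the same algebraic multiplicity $m$ as for $H$: any putative eigenvalue $\lambda_\epsilon\neq\lambda$ of $H_\epsilon$ in $D$, with eigenvector $v_\epsilon$, would produce via $u_\epsilon=f_\epsilon^{-1}v_\epsilon\in D(H)$ an eigenvector of $H$ with eigenvalue $\lambda_\epsilon$, a contradiction. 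Then, if $\mathcal{M}_\epsilon=\mathrm{Ran}\,P_\epsilon$ (dimension $m$), every $v\in\mathcal{M}_\epsilon$ satisfies $(H-\lambda)^{n_\epsilon}(f_\epsilon^{-1}v)=f_\epsilon^{-1}(H_\epsilon-\lambda)^{n_\epsilon}v=0$, hence $f_\epsilon^{-1}v\in\Ker(H-\lambda)$ because $H$ is self-adjoint. The linear map $v\mapsto f_\epsilon^{-1}v$ is injective (multiplication by a strictly positive function) between two spaces of the same finite dimension $m$, hence bijective. Therefore \emph{every} $u\in\Ker(H-\lambda)$ is of the form $f_\epsilon^{-1}v$ with $v\in\mathcal{M}_\epsilon$, i.e.\ $f_\epsilon u=v\in\mathcal{M}_\epsilon\subset\bigcap_{n\geq1}D(H^n)$ --- the integrability of $f_\epsilon u$ comes out for free. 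This dimension-counting detour through $f_\epsilon^{-1}$, borrowed from Rabier, is the key idea your sketch is missing, and I do not see how your forward truncation approach can be repaired without essentially rediscovering it.
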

The proof of this statement uses some ideas from \cite{Ra} and relies on the following lemma.

\begin{lemma}\label{L-1_2}
 Under the assumptions of Proposition \ref{P-1_1} there exist $\epsilon_0\in(0,1]$ and an open disc $D\subset\mathbb{C}$ with center $\lambda$ such that for any $\epsilon\in(0,\epsilon_0]$ the operator $H_\epsilon$ (defined on $D(H)$) is closed and 
$$
\overline{D}\cap\left(\sigma(H)\cup\sigma(H_\epsilon)\right)\,=\,\{\lambda\}.
$$
Moreover $\lambda$ is an eigenvalue of $H_\epsilon$ having an algebraic multiplicity equal to its multiplicity as eigenvalue of $H$.
\end{lemma}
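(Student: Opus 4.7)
The plan is to run analytic (Kato--Rellich style) perturbation theory on the family $H_\epsilon=H+\epsilon R_\epsilon$, and to close the argument using the structural similarity $H_\epsilon=f_\epsilon Hf_\epsilon^{-1}$. Since $\lambda\in\sigma_{\text{disc}}(H)$, I first pick an open disc $D$ centered at $\lambda$ whose closure meets $\sigma(H)$ only at $\lambda$. The uniform $H$-relative boundedness of $R_\epsilon$ furnishes constants $a,b>0$ with $\|R_\epsilon u\|\leq a\|Hu\|+b\|u\|$ for every $u\in D(H)$ and every $\epsilon\in(0,1]$; for $\epsilon<1/a$ the $H$-bound of $\epsilon R_\epsilon$ is strictly less than $1$, so by the Kato--Rellich theorem $H_\epsilon$ is closed on $D(H)$.

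Next, for $z\in\partial D$ I use the factorisation $H_\epsilon-z=\bigl(I+\epsilon R_\epsilon(H-z)^{-1}\bigr)(H-z)$. Since $\|R_\epsilon(H-z)^{-1}\|\leq a\|H(H-z)^{-1}\|+b\|(H-z)^{-1}\|$ and both terms on the right are uniformly bounded on the compact set $\partial D$, I can choose $\epsilon_0\in(0,1]$ so small that $\|\epsilon R_\epsilon(H-z)^{-1}\|<1/2$ for every $z\in\partial D$ and every $\epsilon\in(0,\epsilon_0]$. Neumann inversion then simultaneously gives $\partial D\subset\rho(H_\epsilon)$ and the uniform resolvent estimate $\|(H_\epsilon-z)^{-1}-(H-z)^{-1}\|=O(\epsilon)$ on $\partial D$.

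Let $P$ and $P_\epsilon$ denote the Riesz projections of $H$ and $H_\epsilon$ associated to $\partial D$. The uniform estimate gives $\|P_\epsilon-P\|\to0$ as $\epsilon\to 0$, so since $\text{rank}(P)=N:=\dim\Ker(H-\lambda)<\infty$, for $\epsilon$ sufficiently small $\text{rank}(P_\epsilon)=N$ as well. The range of $P_\epsilon$ is $H_\epsilon$-invariant and finite-dimensional, so $\sigma(H_\epsilon)\cap\overline{D}$ is a nonempty finite set of eigenvalues of $H_\epsilon$ whose algebraic multiplicities sum to $N$.

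The final ingredient pins this set to $\{\lambda\}$ and uses the similarity in an essential way. If $u\in D(H)$ satisfies $H_\epsilon u=\mu u$, then $f_\epsilon Hf_\epsilon^{-1}u=\mu u$, whence $H(f_\epsilon^{-1}u)=\mu(f_\epsilon^{-1}u)$; by part 2 of Hypothesis \ref{Hyp-weight} the vector $f_\epsilon^{-1}u$ lies in $D(H)$, and it is nonzero because $f_\epsilon\geq1$. Therefore $\sigma_p(H_\epsilon)\subseteq\sigma_p(H)$, and combining with the previous step one obtains $\sigma(H_\epsilon)\cap\overline{D}\subseteq\{\lambda\}$. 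Matching the sum of algebraic multiplicities then forces $\lambda$ itself to be an eigenvalue of $H_\epsilon$ with algebraic multiplicity exactly $N$. The technically most delicate point is making the Neumann estimate in step two uniform both in $z\in\partial D$ and in $\epsilon$; everything else is bookkeeping, and the similarity argument at the end is what rules out eigenvalue splitting and yields the sharp form of the conclusion.
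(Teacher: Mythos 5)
Your proof is correct and follows essentially the same route as the paper: Kato--Rellich closedness, perturbation-theoretic localisation of $\sigma(H_\epsilon)$ near $\lambda$ with conserved total algebraic multiplicity (the paper cites Kato \S V.4.3 where you spell out the Neumann series and Riesz projection argument), and the similarity $H_\epsilon=f_\epsilon Hf_\epsilon^{-1}$ to rule out any eigenvalue in $\overline D\setminus\{\lambda\}$. No gaps.
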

\begin{proof}
 Due to point 1 of Hypothesis \ref{Hyp-weight}, if $\epsilon_0\in(0,1]$ is small enough and $\epsilon\in(0,\epsilon_0]$ then the operator $\epsilon R_\epsilon$ is $H$-relatively bounded with relative bound strictly less then 1. Thus, $H$ being a closed operator (self-adjoint in fact) and $H_\epsilon=H+\epsilon R_\epsilon$, the Theorem IV.1.1 in \cite{K} implies that $H_\epsilon$ is also a closed operator.

Let $m$ be the multiplicity of $\lambda$ as an eigenvalue of $H$. Using \S V.4.3 in \cite{K} we conclude that for $\epsilon_0$ small enough one can find an open disc $D\subset\mathbb{C}$ with center $\lambda$ such that $\overline{D}\bigcap\sigma(H)=\{\lambda\}$ and such that we also have 
$$
\overline{D}\cap\sigma(H_\epsilon)=D\cap\sigma(H_\epsilon)=\left\{\lambda_{\epsilon,1},\ldots,\lambda_{\epsilon,q(\epsilon)}\right\},\quad\forall\epsilon\in(0,\epsilon_0]
$$
where for any $j\in\{1,\ldots,q(\epsilon)\}$, $\lambda_{\epsilon,j}$ is an isolated eigenvalue of $H_\epsilon$ with algebraic multiplicity equal to $m_{\epsilon,j}$ and we have
$$
m_{\epsilon,1}+\ldots+m_{\epsilon,q(\epsilon)}\,=\,m.
$$
We shall prove now that we have in fact: $q(\epsilon)=1$ and $\lambda_{\epsilon,1}=\lambda$. Suppose that this would not be true; then $H_\epsilon$ would have an eigenvalue $\lambda_\epsilon\in D\setminus\{\lambda\}$. Let $v_\epsilon\in D(H)$ be an eigenfunction of $H_\epsilon$ associated to $\lambda_\epsilon$. Due to points 2 and 3 of our Hypothesis \ref{Hyp-weight} we conclude that 
$$
u_\epsilon:=f_\epsilon^{-1}v_\epsilon\in D(H), \text{ and }
(H-\lambda_\epsilon)u_\epsilon=f^{-1}_\epsilon(H_\epsilon-\lambda_\epsilon)v_\epsilon=0.
$$
This means that $\lambda_\epsilon$ should also be an eigenvalue of $H$ and this is false by hypothesis.
\end{proof}

\begin{proof_} {\sf (of Proposition \ref{P-1_1})}

 Let us consider $\epsilon_0\in(0,1]$ and the disc $D$ from Lemma \ref{L-1_2}. We denote by $H_0:=H$ and for any $\epsilon\in[0,\epsilon_0]$
$$
P_\epsilon:=(2\pi i)^{-1}\int_{\partial D}(H_\epsilon-\mu)^{-1}d\mu,
$$
the orthogonal projection in $L^2(\mathbb{R}^d)$ having an image denoted by $\mathcal{M}_\epsilon$. Using \S III.6 from \cite{K} we conclude that $\dim\mathcal{M}_\epsilon=m$ (the multiplicity of the eigenvalue $\lambda$ of $H$) and there exists $n_\epsilon\in\mathbb{N}^*$ such that $\mathcal{M}_\epsilon=\Ker(H_\epsilon-\lambda)^n$ for any $n\geq n_\epsilon$. In particular $n_0=1$.

Let us notice that if $v\in\mathcal{M}_\epsilon$ then we also have that $v\in D(H^n)$ for any $n\geq1$ and $(H_\epsilon-\lambda)^{n_\epsilon}v=0$. We deduce that $u:=f_\epsilon^{-1}v\in D(H^n)$ for any $n\geq1$ and 
$$
(H-\lambda)^{n_\epsilon}u=f_\epsilon^{-1}(H_\epsilon-\lambda)^{n_\epsilon}v=0
$$
concluding that $u\in\Ker(H-\lambda)$. Thus, taking any basis $\{v_1,\ldots,v_m\}$ for $\mathcal{M}_\epsilon$ and denoting by $u_j:=f_\epsilon^{-1}v_j$ for $j\in\{1,\ldots,m\}$, we obtain a basis $\{u_1,\ldots,u_m\}$ for $\Ker(H-\lambda)$. We conclude that any $u\in\Ker(H-\lambda)$ is of the form $u=f_\epsilon^{-1}v$ with $v\in\mathcal{M}_\epsilon\subset\underset{n\geq1}{\bigcap}D(H^n)$.
\end{proof_}

\section{Proof of Theorem \ref{Main-1}}

The proof of our Theorems relies on the magnetic pseudodifferential calculus we have developped in \cite{MP1,IMP1,IMP3}. The behaviour with respect to the small parameter $\epsilon\in(0,\epsilon_0]$ is achieved by using some asymptotic expansions near $\epsilon=0$ and the control of the remainders puts into evidence in a natural way a class of symbols with thre variables. In fact it seems easier to sistematically work with a pseudodifferential calculus associated to such symbols. 
\begin{definition}\label{psd-ampl}
 For $a\in S^m(\mathbb{R}^{2d}\times\mathbb{R}^d)$ we consider the linear operator defined by the following oscillatory integral:
$$
\left[\mathfrak{E}(a)u\right](x):=\int_{\mathbb{R}^{2d}}e^{i<x-y,\eta>}\omega^A(x,y)a(x,y,\eta)u(y)\,dy\,\dbar\eta,\qquad\forall u\in\mathcal{S}(\mathbb{R}^d),\quad\forall x\in\mathbb{R}^d.
$$
\end{definition}
One can use exactly the arguments in \cite{IMP1} in order to prove that for $a\in S^m(\mathbb{R}^{2d}\times\mathbb{R}^d)$ the operator 
$\mathfrak{E}(a)$ belongs in fact to $\mathbb{B}\big(\mathcal{S}(\mathbb{R}^d)\big)$, but the following result shows that in fact these new operators are in fact magnetic pseudodifferential operators in the sense of \cite{MP1, IMP1} for some associate symbol in $S^m(\mathbb{R}^d)$.

\begin{lemma}\label{L-2_1}
 Let $a\in S^m(\mathbb{R}^{2d}\times\mathbb{R}^d)$ for some $m\in\mathbb{R}$ and let $t\in[0,1]$. Then there exists a unique symbol $\overset{\circ}{a}_t\in S^m(\mathbb{R}^d)$ such that $\mathfrak{E}(a)=\mathfrak{E}(\overset{\circ}{a}_t\circ L_t)$ with $L_t:\mathbb{R}^{2d}\times\mathbb{R}^d\rightarrow\mathbb{R}^d\times\mathbb{R}^d$ given by $L_t(x,y,\eta):=(tx+(1-t)y,\eta)$ and the map
$$
S^m(\mathbb{R}^{2d}\times\mathbb{R}^d)\ni a\mapsto \overset{\circ}{a}_t\in S^m(\mathbb{R}^d)
$$is continuous.
\end{lemma}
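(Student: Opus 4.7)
The plan is to exploit the fact that the magnetic phase $\omega^A(x,y)$ appears identically on both sides of the desired identity, so that the lemma reduces to a classical amplitude-to-symbol reduction from the non-magnetic pseudodifferential calculus. Writing the Schwartz kernels of $\mathfrak{E}(a)$ and $\mathfrak{E}(\overset{\circ}{a}_t\circ L_t)$ as oscillatory integrals, both carry the common factor $\omega^A(x,y)$, so the sought equality is equivalent to
\[
\int e^{i\langle x-y,\eta\rangle}a(x,y,\eta)\,\dbar\eta \;=\; \int e^{i\langle x-y,\eta\rangle}\,\overset{\circ}{a}_t(tx+(1-t)y,\eta)\,\dbar\eta.
\]
Uniqueness is then immediate: if two candidates $b_1,b_2\in S^m(\mathbb{R}^d)$ yielded the same kernel, then freezing $z:=tx+(1-t)y$ and varying $w:=x-y$ would express the inverse Fourier transform of $b_1(z,\cdot)-b_2(z,\cdot)$ as identically vanishing, forcing $b_1=b_2$.

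For existence I would take the candidate, as an oscillatory integral,
\[
\overset{\circ}{a}_t(z,\eta) := \iint e^{i\langle w,\theta\rangle}\,a\bigl(z+(1-t)w,\,z-tw,\,\eta+\theta\bigr)\,dw\,\dbar\theta.
\]
A formal computation---substitute $\eta\mapsto\eta-\theta$ in the oscillatory integral representing $[\mathfrak{E}(\overset{\circ}{a}_t\circ L_t)u](x)$, apply Fubini, and reduce the resulting $\theta$-integral to a $\delta$-mass that forces $w=x-y$---recovers $a(x,y,\eta)$ in the integrand and yields the required identity. The rigorous justification proceeds by the standard regularization trick: multiply the integrand by $\chi(\varepsilon w,\varepsilon\theta)$ with $\chi\in\mathcal{S}(\mathbb{R}^{2d})$, $\chi(0)=1$, perform the now absolutely convergent manipulations, and pass to the limit $\varepsilon\searrow 0$.

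The main technical obstacle is showing $\overset{\circ}{a}_t\in S^m(\mathbb{R}^d)$ and the continuity of the map $a\mapsto\overset{\circ}{a}_t$. The standard device is to insert the identities $(1-\Delta_\theta)^N e^{i\langle w,\theta\rangle}=\langle w\rangle^{2N}e^{i\langle w,\theta\rangle}$ and $(1-\Delta_w)^N e^{i\langle w,\theta\rangle}=\langle\theta\rangle^{2N}e^{i\langle w,\theta\rangle}$, transfer the Laplacians onto $a$ by integration by parts, and use the $S^m$-regularity of $a$ in its covariable together with the Peetre-type estimate $\langle\eta+\theta\rangle^m\leq C_m\langle\theta\rangle^{|m|}\langle\eta\rangle^m$. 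For $N$ large this makes the double integral absolutely convergent and gives the bound $|\overset{\circ}{a}_t(z,\eta)|\leq C\langle\eta\rangle^m$; differentiating under the integral in $z$ and $\eta$ and iterating produces all the $S^m$-seminorms of $\overset{\circ}{a}_t$ in terms of finitely many seminorms of $a$, which simultaneously establishes membership in $S^m(\mathbb{R}^d)$ and continuity of the assignment.
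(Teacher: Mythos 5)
Your proposal is correct and follows essentially the same route as the paper: both cancel the common magnetic phase $\omega^A(x,y)$ to reduce to a non-magnetic amplitude-to-symbol reduction, arrive at the identical oscillatory-integral formula $\overset{\circ}{a}_t(z,\eta)=\iint e^{i\langle w,\theta\rangle}a(z+(1-t)w,z-tw,\eta+\theta)\,dw\,\dbar\theta$, and establish the $S^m$-seminorm bounds (and hence continuity) by inserting $(1-\Delta_w)^N$, $(1-\Delta_\theta)^N$ and integrating by parts. The only cosmetic difference is that the paper packages existence and uniqueness together by extending $b\mapsto K_{b\circ L_t}$ to a topological automorphism $\mathfrak{L}_t$ of $\mathcal{S}'(\mathbb{R}^{2d})$ and setting $\overset{\circ}{a}_t:=\mathfrak{L}_t^{-1}(K_a)$, whereas you argue uniqueness by a direct Fourier inversion in the variable $w=x-y$ and then verify the explicit candidate separately.
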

\begin{remark}\label{Rem-L-2_1}
 Taking $t=1/2$ in the above Lemma we obtain that for any $a\in S^m(\mathbb{R}^{2d}\times\mathbb{R}^d)$ there exists a unique $\overset{\circ}{a}\in S^m(\mathbb{R}^{d})$ such that $\mathfrak{E}(a)=\mathfrak{Op}^A(\overset{\circ}{a})$.
\end{remark}
\begin{proof}
 The distribution kernel of the linear operator $\mathfrak{E}(a)$ is given explicitely by the following oscillatory integral (as element in $\mathcal{S}^\prime(\mathbb{R}^{2d})$:
$$
K_a(x,y):=\omega^A(x,y)\int_{\mathbb{R}^d}e^{i<x-y,\eta>}a(x,y,\eta)\,\dbar\eta.
$$
More precisely, choosing $\chi\in C^\infty_0(\mathbb{R}^d)$ with $\chi(0)=1$ and denoting by $\chi_\epsilon(\eta):=\chi(\epsilon\eta)$ for any $\epsilon\in[0,1]$, we define
$$
K_a(x,y):=\underset{\epsilon\searrow0}{\lim}K_{a_\epsilon}\ \text{in }\mathcal{S}^\prime(\mathbb{R}^{2d}),\quad\text{for }a_\epsilon(x,y,\eta):=\chi_\epsilon(\eta)a(x,y,\eta).
$$
Fixing now $t\in[0,1]$ and some $b\in S^m(\mathbb{R}^d)$ we notice that we have the following equality in $\mathcal{S}^\prime(\mathbb{R}^{2d})$
$$
K_{b\circ L_t}=\omega^AS_t^{-1}\mathcal{F}_2^{-1}b
$$
where $\mathcal{F}_2$ is the partial Fourier transform with respect to the second variable and $S_t$ is the linear change of variables $S_tf(x,y):=f(x+(1-t)y,x-ty)$. This equality allows us to extend the map 
$$
S^m(\mathbb{R}^d)\ni b\mapsto K_{b\circ L_t}\in\mathcal{S}^\prime(\mathbb{R}^{2d})
$$
to a linear topological isomorphism
$$
\mathcal{S}^\prime(\mathbb{R}^{2d})\ni b\overset{\mathfrak{L}_t}{\mapsto} K_{b\circ L_t}\in\mathcal{S}^\prime(\mathbb{R}^{2d}).
$$

For any $a\in S^m(\mathbb{R}^{2d}\times\mathbb{R}^d)$ let us define $\overset{\circ}{a}_t:=\mathfrak{L}_t^{-1}\big(K_a\big)$. Proving the Lemma now reduces to prove that this $\overset{\circ}{a}_t$ in fact belongs to $S^m(\mathbb{R}^d)$. For that we shall use all the above conclusions in order to write
$$
\overset{\circ}{a}_t=\underset{\epsilon\searrow0}{\lim}\mathfrak{L}_t^{-1}\big(K_{a_\epsilon}\big)
$$
and transform the oscillating integrals in some usual convergent integrals by using the equalities:
$$
e^{i<z,\zeta>}=<z>^{-2N_1}(1-\Delta_\zeta)^{N_1}e^{i<z,\zeta>}=<\zeta>^{-2N_2}(1-\Delta_z)^{N_2}e^{i<z,\zeta>},\quad\forall(z,\zeta)\in\mathbb{R}^{2d},\,\forall(N_1,N_2)\in\mathbb{N}^2.
$$
Taking $2N_1>d$ and $2N_2>m+d$ we obtain
$$
\overset{\circ}{a}_t(x,\eta)=\int_{\mathbb{R}^{2d}}e^{i<z,\zeta>}<z>^{-2N_1}(1-\Delta_\zeta)^{N_1}\left[<\zeta>^{-2N_2}(1-\Delta_z)^{N_2}\left(a(x+(1-t)z,x-tz,\eta+\zeta)\right)\right]dz\,\dbar\zeta.
$$
In order to estimate the derivatives of $\overset{\circ}{a}_t$ we just have to choose $N_1$ and $N_2$ as large as necessary.
\end{proof}

The following Lemma is the essential technical step in the proof of Theorem \ref{Main-1}.

\begin{lemma}\label{L-2_2}
 Let $b\in S^m(\mathbb{R}^{2d}\times\mathbb{R}^d)$ for some $m\in\mathbb{R}$ and let $f(x):=<x>^p$ for some fixed $p\in\mathbb{N}$. For any $\epsilon\in(0,1]$ we define $f_\epsilon(x):=f(\epsilon x)$. Then there exists a bounded family of symbols $\{s_\epsilon\}_{\epsilon\in(0,1]}$ in $S^{m-1}(\mathbb{R}^{2d}\times\mathbb{R}^d)$ that verify the following equality as linear operators on $\mathcal{S}(\mathbb{R}^d)$:
$$
f_\epsilon\mathfrak{E}(b)f_\epsilon^{-1}=\mathfrak{E}(b)+\epsilon\mathfrak{E}(s_\epsilon),\quad\forall\epsilon\in(0,1].
$$
\end{lemma}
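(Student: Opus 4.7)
The starting observation is that, since the phase $e^{i\langle x-y,\eta\rangle}$ and the factor $\omega^A(x,y)$ are independent of the weight, the integral representation in Definition \ref{psd-ampl} yields
$$f_\epsilon\, \mathfrak{E}(b)\, f_\epsilon^{-1} \,=\, \mathfrak{E}(\tilde b_\epsilon), \qquad \tilde b_\epsilon(x,y,\eta) \,:=\, \frac{f_\epsilon(x)}{f_\epsilon(y)}\,b(x,y,\eta),$$
as operators on $\mathcal{S}(\mathbb{R}^d)$ (noting $f_\epsilon^{-1}\mathcal{S}\subset\mathcal{S}$ because $f_\epsilon^{-1}\in BC^\infty$). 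The difficulty is that $\tilde b_\epsilon$ itself is \emph{not} in $S^m(\mathbb{R}^{2d}\times\mathbb{R}^d)$, since $\langle\epsilon x\rangle^p/\langle\epsilon y\rangle^p$ is not bounded in $(x,y)$; so the lemma must be obtained by further rewriting of the right-hand side.

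My plan is to Taylor-expand $f_\epsilon(x)/f_\epsilon(y)$ in the variable $x$ around $x=y$, truncated at order $p+1$ with integral remainder. Writing $f(z)=\langle z\rangle^p$, this gives
$$\frac{f_\epsilon(x)}{f_\epsilon(y)} \,=\, 1 \,+\, \sum_{1\le|\alpha|\le p} c_\alpha(y;\epsilon)\,(x-y)^\alpha \,+\, \sum_{|\alpha|=p+1} \tilde c_\alpha(x,y;\epsilon)\,(x-y)^\alpha,$$
where $c_\alpha(y;\epsilon) = \epsilon^{|\alpha|}(\partial^\alpha f)(\epsilon y)/[\alpha!\,f(\epsilon y)]$ and $\tilde c_\alpha$ is the usual integral-remainder coefficient with integrand proportional to $(\partial^\alpha f)(\epsilon(y+t(x-y)))/f(\epsilon y)$. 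The classical estimate $|\partial^\alpha\langle z\rangle^p|\le C_\alpha \langle z\rangle^{p-|\alpha|}$ readily implies that $c_\alpha(\cdot;\epsilon)\in BC^\infty(\mathbb{R}^d)$ uniformly in $\epsilon\in(0,1]$, with $|c_\alpha|\le C\epsilon^{|\alpha|}$. The choice of remainder order $p+1$ is essential: it is the smallest integer for which $|\partial^\alpha f|\le C\langle z\rangle^{-1}\le C$, which ensures that $\tilde c_\alpha(\cdot,\cdot;\epsilon)\in BC^\infty(\mathbb{R}^{2d})$ with $|\tilde c_\alpha|\le C\epsilon^{p+1}$ uniformly in $\epsilon\in(0,1]$, despite the genuine joint dependence on $x$ and $y$.

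Multiplying the Taylor identity by $b(x,y,\eta)$ and inserting it in the $\mathfrak{E}$ integral, I would remove each factor $(x-y)^\alpha$ by integration by parts in $\eta$ based on $(x-y)^\alpha e^{i\langle x-y,\eta\rangle} = D_\eta^\alpha e^{i\langle x-y,\eta\rangle}$, which at the level of amplitudes is the rule $\mathfrak{E}((x-y)^\alpha q)=\mathfrak{E}(i^{|\alpha|}\partial_\eta^\alpha q)$. Since $c_\alpha,\tilde c_\alpha$ do not depend on $\eta$, the derivative $\partial_\eta^\alpha$ falls entirely on $b$, giving corrective amplitudes
$$c_\alpha(y;\epsilon)\,\partial_\eta^\alpha b(x,y,\eta)\in S^{m-|\alpha|},\qquad \tilde c_\alpha(x,y;\epsilon)\,\partial_\eta^\alpha b(x,y,\eta)\in S^{m-p-1},$$
each bounded uniformly in $\epsilon\in(0,1]$ and of size $O(\epsilon^{|\alpha|})$, respectively $O(\epsilon^{p+1})$, in the seminorms. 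Since $|\alpha|\ge 1$ in every surviving term, a single factor $\epsilon$ can be pulled out, and summing the remaining amplitudes produces the required family $\{s_\epsilon\}\subset S^{m-1}(\mathbb{R}^{2d}\times\mathbb{R}^d)$, bounded for the Fr\'echet topology.

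The main technical obstacle will be the uniform $S^{m-1}$ control of the remainder symbol $\tilde c_\alpha\,\partial_\eta^\alpha b$, because $\tilde c_\alpha$ genuinely depends on both $x$ and $y$ through the above integrand. One must differentiate under the integral sign and bound $\partial_x^{\beta_1}\partial_y^{\beta_2}\tilde c_\alpha$ uniformly in $\epsilon\in(0,1]$ and $t\in[0,1]$; this follows from $|\partial^{\alpha+\beta}\langle z\rangle^p|\le C\langle z\rangle^{p-|\alpha|-|\beta|}\le C$ (for $|\alpha|=p+1$) together with the observation that every chain-rule differentiation in $x$ or $y$ brings out a harmless factor of $\epsilon\le 1$, while the division by $f(\epsilon y)\ge 1$ remains $BC^\infty$.
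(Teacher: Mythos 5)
Your proof is correct, but it takes a genuinely different route from the paper's. The paper uses only a \emph{first-order} Taylor expansion of $f_\epsilon(x)$ around $y$ with integral remainder, performs a single integration by parts in $\eta$ to convert the $(x-y)$ factor into $\nabla_\eta b$, and is then left with an amplitude containing the unbounded factor $f_\epsilon(y)^{-1}(\nabla f_\epsilon)(y+t(x-y))$, which grows like $\epsilon\langle x-y\rangle^p$ by a Peetre-type bound $f_\epsilon(y+t(x-y))\leq C f_\epsilon(y)\langle x-y\rangle^p$. This growth is then tamed by a \emph{second} round of integration by parts based on the identity $e^{i\langle x-y,\eta\rangle}=\langle x-y\rangle^{-2N}(1-\Delta_\eta)^{N}e^{i\langle x-y,\eta\rangle}$ with $2N\geq p$, which inserts a compensating factor $\langle x-y\rangle^{-2N}$ at the cost of further $\eta$-derivatives on $b$ (harmless, since $(1-\Delta_\eta)^N$ preserves $S^{m-1}$). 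By contrast, you Taylor-expand to order $p+1$, exploiting the specific structure of $f=\langle\cdot\rangle^p$ to make the remainder coefficient $\tilde c_\alpha$ already bounded (together with its derivatives) without any extra regularization, and then iterate the $\eta$-integration by parts $|\alpha|$ times for each surviving term. Both arguments are sound; yours dispenses with the $\langle x-y\rangle^{-2N}$ trick and is somewhat more transparent in its bookkeeping, while the paper's first-order-plus-regularization template is closer in spirit to the version it reuses in the exponential-weight Lemma \ref{L-3_3}, where a terminating polynomial Taylor expansion is unavailable. One minor point worth spelling out in a final write-up: the uniform $S^{m-1}$ bound requires not just $|\tilde c_\alpha|\leq C\epsilon^{p+1}$ but also $|\partial_x^{\beta_1}\partial_y^{\beta_2}\tilde c_\alpha|\leq C_{\beta_1,\beta_2}\epsilon^{p+1}$ for all multi-indices, which you correctly identify as following from $|\partial^{\gamma}\langle z\rangle^p|\leq C_\gamma\langle z\rangle^{p-|\gamma|}$ plus the extra $\epsilon$ produced by each chain-rule differentiation; that detail is indeed the one that needs care, and your sketch handles it correctly.
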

\begin{proof}
 Let us start with the first order Taylor development:
$$
f_\epsilon(x)=f_\epsilon(y)+\left<x-y,\int_0^1\big(\nabla f_\epsilon\big)\big(y+t(x-y)\big)dt\right>
$$
and compute $f_\epsilon\mathfrak{E}(b)f_\epsilon^{-1}$ by using integrations by parts with respect to the variable $\eta$ and the equality
$$
(x-y)e^{i<x-y,\eta>}=D_\eta\big(e^{i<x-y,\eta>}\big)
$$
in order to obtain the fromula in the statement of Lemma \ref{L-2_2} with
$$
s_\epsilon(x,y,\eta):=i\epsilon^{-1}<x-y>^{-2N}\int_0^1\left<f_\epsilon(y)^{-1}\big(\nabla f_\epsilon\big)\big(y+t(x-y)\big),(1-\Delta)^N\big(\nabla_\eta b\big)(x,y,\eta)\right>dt,
$$
choosing $N$ such that $2N\geq p$. It is now easy to notice that for any $\alpha\in\mathbb{N}^d$ with $|\alpha|\geq1$ one can write
$$
\partial^\alpha\big(f_\epsilon^{\pm1}\big)=\epsilon f_\epsilon^{\pm1}g^\pm_{\epsilon,\alpha}
$$
with $g^\pm_{\epsilon,\alpha}\in BC^\infty(\mathbb{R}^d)$ uniformly with respect to $\epsilon\in(0,1]$. Moreover it is also evident due to the explicit form of $f(x)=<x>^p$ that there exists a positive constant $C$ such that
$$
f_\epsilon\big(y+t(x-y)\big)\leq Cf_\epsilon(y)<x-y>^p,\quad\forall(x,y)\in\mathbb{R}^d\times\mathbb{R}^d,\ \forall t\in[0,1],\ \forall\epsilon\in(0,1].
$$
It follows easily that $s_\epsilon\in S^{m-1}(\mathbb{R}^{2d}\times\mathbb{R}^d)$ uniformly with respect with $\epsilon\in(0,1]$.
\end{proof}
\begin{proof_} {\sf of Theorem \ref{Main-1}}

{\sf Point 1.} We shall apply Proposition \ref{P-1_1} to the operator $H$ associated to $\mathfrak{Op}^A(a)$. Under the assumptions of Theorem \ref{Main-1} we have that $D(H)=\mathcal{H}^m_A(\mathbb{R}^d)$ if $m>0$ and $D(H)=L^2(\mathbb{R}^d)$ if $m\leq0$.

Let us prove that the function $f_\epsilon(x)=<\epsilon x>^p$ from Lemma \ref{L-2_2} verifies the conditions 1-3 in Hypothesis \ref{Hyp-weight} as assumed in the hypothesis of Proposition \ref{P-1_1}. It is evident that $f_\epsilon^{-1}\in S^0(\mathbb{R}^d)$ so that the operator of multiplication with $f_\epsilon^{-1}$ in $L^2(\mathbb{R}^d)$ leaves invariant all the magnetic Sobolev spaces and thus satisfies Hypothesis \ref{Hyp-weight} (2). For the other two conditions in Hypothesis \ref{Hyp-weight} we shall use Lemma \ref{L-2_2} taking $b(x,y,\eta):=a\left(\frac{x+y}{2},\eta\right)$. In fact due to Lemma \ref{L-2_1} we have
$$
R_\epsilon:=\mathfrak{E}(s_\epsilon)=\mathfrak{Op}^A(r_\epsilon)
$$
with $\{r_\epsilon\}_{\epsilon\in(0,1]}$ a bounded family of symbols in $S^{m-1}(\mathbb{R}^d)$, so that
$$
R_\epsilon\in\mathbb{B}\left(\mathcal{H}^s_A(\mathbb{R}^d);\mathcal{H}^{s-m+1}_A(\mathbb{R}^d)\right),\quad\forall s\in\mathbb{R}
$$
uniformly with respect to $\epsilon\in(0,1]$. In particular, as operator in $L^2(\mathbb{R}^d)$ we have that $R_\epsilon\in\mathbb{B}\left(D(H);L^2(\mathbb{R}^d)\right)$ uniformly with respect to $\epsilon\in(0,1]$ and thus verifies Hypothesis  \ref{Hyp-weight} (1). Then Hypothesis \ref{Hyp-weight} (3) is a direct consequence of the equality in Lemma \ref{L-2_2}.

Using now Proposition \ref{P-1_1} we conclude that for any $p\in\mathbb{N}$ there exists $\epsilon_0\in(0,1]$ such that $<\epsilon x>^pu\in D(H^n)$ for any $n\geq1$ and any $\epsilon\in(0,\epsilon_0]$.

{\sf Point 2.} Suppose first that $m>0$. As for any $n\geq1$ we have $D(H^n)=\mathcal{H}^{nm}_A(\mathbb{R}^d)$ we conclude that for $u$ as in the statement of Theorem \ref{Main-1}
$$
<x>^pu\in\mathcal{H}^\infty_A(\mathbb{R}^d)=\underset{k\in\mathbb{N}}{\bigcap}\mathcal{H}^k_A(\mathbb{R}^d),\qquad\forall p\in\mathbb{N}.
$$
Taking now into account the description of the magnetic Sobolev spaces recalled in the Introduction and the fact that we could chosen $A$ with components of class $C^\infty_{\text{\sf pol}}(\mathbb{R}^d)$ we finaly conclude that $<x>^p\partial^\alpha u\in L^2(\mathbb{R}^d)$ for any $p\in\mathbb{N}$ and any $\alpha\in\mathbb{N}^d$ and thus $u\in\mathcal{S}(\mathbb{R}^d)$.

Let us consider now the case $m<0$ and $\lambda\ne0$. As a consequence of the proof of Point 1 we have seen that $r_\epsilon\in S^{m-1}(\mathbb{R}^d)$ and thus, due also to the formula in Lemma \ref{L-2_2} $H_\epsilon$ is a magnetic pseudodifferential operator of order $m$ for any $\epsilon\in(0,\epsilon_0]$. Going back to the proof of Proposition \ref{P-1_1} we can prove that for any $\epsilon\in(0,\epsilon_0]$ there exists $n_\epsilon\in\mathbb{N}^*$ such that for any $u\in\Ker(H-\lambda)$ we can find $v\in L^2(\mathbb{R}^d)$ such that $(H_\epsilon-\lambda)^{n_\epsilon}v=0$ and $u=f_\epsilon^{-1}v$. But if $\lambda\ne0$ the equation  $(H_\epsilon-\lambda)^{n_\epsilon}v=0$ implies that $v=Q_\epsilon v$ with $Q_\epsilon$ a magnetic pseudodifferential operator of order $m<0$ and thus $v\in\mathcal{H}^\infty_A(\mathbb{R}^d)$ and the proof can continue as in the case $m>0$.
\end{proof_}

\section{Proof of Theorem \ref{Main-2}}

The proof of the Theorem will be based on the following technical lemmas.

\begin{lemma}\label{L-3_1}
 Under the assumptions of Theorem \ref{Main-2} there exists a positive constant $C$ such that for any $\alpha\in\mathbb{N}^d$ the following inequality is true:
$$
\left|\big(\partial^\alpha_\eta a\big)(x,\eta)\right|\leq C\left(\frac{2}{\delta}\right)^{|\alpha|}\alpha!<\eta>^m,\quad\forall(x,\eta)\in\mathbb{R}^{2d}.
$$
\end{lemma}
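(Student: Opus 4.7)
The plan is to obtain the estimate by a standard Cauchy estimate for analytic functions of several complex variables applied on a polytorus, combined with the uniform $S^m$ bound coming from Hypothesis \ref{Hyp-symb-olo} (2). In particular, the claim will follow essentially from the $0$-th order ($\alpha=0$, $\beta=0$) seminorm of the $S^m$-family $\{\widetilde{a}(\cdot,\cdot+i\xi)\}_{|\xi|\le\delta/2}$ together with Cauchy's integral formula.

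Fix $x \in \mathbb{R}^d$ and $\eta\in\mathbb{R}^d$. Since $\widetilde{a}(x,\cdot)$ is analytic on $D_\delta$, it is in particular analytic on any polydisc centered at $\eta$ of polyradius strictly less than $\delta$. The first step is to pick polyradius $r_j=\delta/2$ for every $j$ and apply the Cauchy formula on the associated polytorus $T:=\{\zeta\in\mathbb{C}^d\mid |\zeta_j-\eta_j|=\delta/2,\,1\le j\le d\}$:
$$
\big(\partial^\alpha_\eta a\big)(x,\eta)\;=\;\frac{\alpha!}{(2\pi i)^d}\int_T\frac{\widetilde{a}(x,\zeta)}{\prod_{j=1}^d(\zeta_j-\eta_j)^{\alpha_j+1}}\,d\zeta_1\cdots d\zeta_d.
$$
The standard modulus estimate then gives
$$
\big|\big(\partial^\alpha_\eta a\big)(x,\eta)\big|\;\le\;\alpha!\,\Big(\frac{2}{\delta}\Big)^{|\alpha|}\,\sup_{\zeta\in T}\big|\widetilde{a}(x,\zeta)\big|.
$$

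Next, I need to control $\sup_{\zeta\in T}|\widetilde{a}(x,\zeta)|$ uniformly in $x$ and with the correct dependence in $\eta$. A point $\zeta\in T$ has the form $\zeta_j=\eta_j+(\delta/2)e^{i\theta_j}$, so $\Im\zeta_j=(\delta/2)\sin\theta_j$ with $|\Im\zeta_j|\le\delta/2<\delta$. Therefore the uniform $S^m$ bound from Hypothesis \ref{Hyp-symb-olo} (2), applied with $\xi_j=(\delta/2)\sin\theta_j$, gives a constant $C_0>0$ (namely the $0$-th order seminorm of the family) such that
$$
\big|\widetilde{a}(x,\zeta)\big|\;\le\;C_0\,<\Re\zeta>^m
$$
for all $\zeta\in T$ and all $x\in\mathbb{R}^d$.

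Finally, on $T$ one has $\Re\zeta_j=\eta_j+(\delta/2)\cos\theta_j$, so $|\Re\zeta-\eta|\le(\delta/2)\sqrt{d}$. The elementary comparison
$$
<\Re\zeta>^m\;\le\;C_{m,\delta,d}\,<\eta>^m
$$
(valid for any $m\in\mathbb{R}$ because the shift is uniformly bounded) lets us replace $<\Re\zeta>^m$ by $<\eta>^m$ up to a constant independent of $x$, $\eta$ and $\alpha$. Combining the three estimates yields
$$
\big|\big(\partial^\alpha_\eta a\big)(x,\eta)\big|\;\le\;C\Big(\frac{2}{\delta}\Big)^{|\alpha|}\alpha!\,<\eta>^m,\qquad C:=C_0\,C_{m,\delta,d},
$$
as required. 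There is no real obstacle here; the only point that deserves a brief comment is that Hypothesis \ref{Hyp-symb-olo} (2) is stated for purely imaginary perturbations $i\xi$, while the polytorus $T$ contains points with complex perturbations; this is bridged via the observation that any $\zeta\in T$ is a real translate of an imaginary perturbation ($\zeta=\eta'+i\xi$ with $\eta':=\Re\zeta\in\mathbb{R}^d$ and $\xi:=\Im\zeta$), together with the comparison $<\eta'>^m\asymp<\eta>^m$.
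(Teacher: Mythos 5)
Your proof is correct and is essentially identical to the paper's: both apply the Cauchy integral formula on a polydisc of polyradius $\delta/2$ centered at $\eta$, then bound the integrand using the uniform $S^m$ estimate from Hypothesis \ref{Hyp-symb-olo}(2) together with the elementary comparison $<\Re\zeta>^m \leq C<\eta>^m$. The paper states the Cauchy formula and leaves the modulus estimate implicit; you have simply made those routine steps explicit, including the observation that any $\zeta$ on the polytorus is a real translate of an imaginary perturbation within the allowed strip.
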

\begin{proof}
 The above inequality follows directly from the Cauchy formula on a polydisc:
$$
\big(\partial^\alpha_\eta a\big)(x,\eta)=\frac{\alpha!}{(2\pi i)^d}\hspace{-5pt}\int\limits_{|\zeta_1-\eta_1|=\delta/2}\cdots\int\limits_{|\zeta_d-\eta_d|=\delta/2}\frac{\widetilde{a}(x,\zeta)}{(\zeta_1-\eta_1)^{1+\alpha_1}\cdots(\zeta_d-\eta_d)^{1+\alpha_d}}d\zeta_1\cdots d\zeta_d,\qquad\forall\eta=(\eta_1,\ldots,\eta_d)\in\mathbb{R}^d.
$$
\end{proof}

\begin{lemma}\label{L-3_2}
 Let us consider the function $f:\mathbb{R}^d\rightarrow[1,\infty)$ given by $f(x)=e^{<x>}$ and for any $\epsilon\in(0,1]$ let us define $f_\epsilon(x):=f(\epsilon x)$ for any $x\in\mathbb{R}^d$. Then the following equality is true
$$
f_\epsilon(x)f_\epsilon^{-1}(y)=\sum_{\alpha\in\mathbb{N}^d}\frac{\epsilon^{|\alpha|}b_\epsilon(x,y)^\alpha}{\alpha!}(x-y)^\alpha,\quad\forall(x,y)\in\mathbb{R}^d\times\mathbb{R}^d,\ \forall\epsilon\in(0,1],
$$
where $b_\epsilon:=(b_{\epsilon,1},\ldots,b_{\epsilon,d}):\mathbb{R}^{2d}\rightarrow\mathbb{R}^d$ has the properties:
\begin{enumerate}
 \item for any $j\in\{1,\ldots,d\}$ the component $b_{\epsilon,j}$ belongs to $BC^\infty(\mathbb{R}^d)$ uniformly with respect to $\epsilon\in(0,1]$,
\item for any $\epsilon\in(0,1]$ we have $|b_\epsilon(x,y)|\leq1$ for any $(x,y)\in\mathbb{R}^d\times\mathbb{R}^d$.
\end{enumerate}
\end{lemma}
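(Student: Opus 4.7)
The plan starts from the elementary identity $f_\epsilon(x)f_\epsilon(y)^{-1}=\exp\bigl(<\epsilon x>-<\epsilon y>\bigr)$. The strategy is to write $<\epsilon x>-<\epsilon y>$ as a linear combination $\epsilon\sum_j b_{\epsilon,j}(x,y)(x_j-y_j)$ with good coefficients, and then expand the exponential into its Taylor series, exploiting $\exp(w_1+\cdots+w_d)=\sum_{\alpha\in\mathbb{N}^d}w^\alpha/\alpha!$.

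First I would apply the fundamental theorem of calculus to the smooth map $t\mapsto<\epsilon y+t\epsilon(x-y)>$ to obtain
$$
<\epsilon x>-<\epsilon y>=\epsilon\sum_{j=1}^d(x_j-y_j)\,b_{\epsilon,j}(x,y),\qquad b_{\epsilon,j}(x,y):=\int_0^1 g_j\bigl(\epsilon y+t\epsilon(x-y)\bigr)\,dt,
$$
where $g_j(z):=z_j/<z>$. Property 2 is then immediate: the field $g(z):=z/<z>$ satisfies $|g(z)|^2=|z|^2/<z>^2\leq 1$, so the triangle inequality under the integral gives $|b_\epsilon(x,y)|\leq\int_0^1|g(\epsilon y+t\epsilon(x-y))|\,dt\leq 1$.

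Next I would check property 1 (to be read, since $b_\epsilon$ is defined on $\mathbb{R}^{2d}$, as membership in $BC^\infty(\mathbb{R}^{2d})$ uniformly in $\epsilon\in(0,1]$). Each $g_j$ lies in $BC^\infty(\mathbb{R}^d)$: this is a routine induction, or else the observation that $z_j/<z>$ is a symbol of order zero in $z$. The chain rule then yields, for multi-indices $\alpha,\beta\in\mathbb{N}^d$,
$$
\partial^\alpha_x\partial^\beta_y\bigl[g_j\bigl(\epsilon y+t\epsilon(x-y)\bigr)\bigr]=\epsilon^{|\alpha|+|\beta|}\,t^{|\alpha|}(1-t)^{|\beta|}\,(\partial^{\alpha+\beta}g_j)\bigl(\epsilon y+t\epsilon(x-y)\bigr),
$$
which is bounded uniformly in $(x,y,t,\epsilon)$; integrating in $t\in[0,1]$ shows that $b_{\epsilon,j}\in BC^\infty(\mathbb{R}^{2d})$ uniformly in $\epsilon\in(0,1]$, with all derivatives of positive order in fact of size $O(\epsilon)$ (a bonus which will be useful in the companion lemma analogous to Lemma \ref{L-2_2}).

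Finally I would conclude by writing
$$
f_\epsilon(x)f_\epsilon(y)^{-1}=\exp\Bigl(\epsilon\sum_{j=1}^d b_{\epsilon,j}(x,y)(x_j-y_j)\Bigr)=\prod_{j=1}^d\sum_{k_j\geq 0}\frac{\bigl(\epsilon\,b_{\epsilon,j}(x,y)(x_j-y_j)\bigr)^{k_j}}{k_j!},
$$
and regrouping the product of absolutely convergent series as the single sum $\sum_{\alpha\in\mathbb{N}^d}\epsilon^{|\alpha|}b_\epsilon(x,y)^\alpha(x-y)^\alpha/\alpha!$. There is no genuine obstacle; the only point that requires care is keeping track of the $\epsilon$-factors produced when $\epsilon y+t\epsilon(x-y)$ is differentiated in $(x,y)$, so that the estimates in the verification of property 1 remain uniform on $(0,1]$.
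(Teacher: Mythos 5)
Your argument is correct, and the final exponential expansion coincides with the paper's, but you construct the vector field $b_\epsilon$ by a genuinely different device. The paper obtains a closed-form rational expression by the ``conjugate'' identity
$<\epsilon x>-<\epsilon y>=\dfrac{<\epsilon x>^2-<\epsilon y>^2}{<\epsilon x>+<\epsilon y>}=\epsilon\sum_{j}(x_j-y_j)\,\dfrac{\epsilon(x_j+y_j)}{<\epsilon x>+<\epsilon y>}$,
so that $b_{\epsilon,j}(x,y)=\epsilon(x_j+y_j)\big(<\epsilon x>+<\epsilon y>\big)^{-1}$, with $|b_\epsilon|\leq1$ coming from $|\epsilon x+\epsilon y|\leq<\epsilon x>+<\epsilon y>$ and the $BC^\infty$ bounds from direct inspection of this rational function. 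You instead use the fundamental theorem of calculus along the segment from $\epsilon y$ to $\epsilon x$, yielding the parameter integral $b_{\epsilon,j}(x,y)=\int_0^1 g_j(\epsilon y+t\epsilon(x-y))\,dt$ with $g_j(z)=z_j/<z>$; the bound $|b_\epsilon|\leq1$ then follows from Minkowski's integral inequality and $|g|\leq1$, and the uniform $BC^\infty(\mathbb{R}^{2d})$ bounds follow from the chain rule since $g_j$ is a symbol of order $0$. Both produce a valid $b_\epsilon$ (the lemma is existential, so producing a different one is fine), and both silently correct the statement's typo $BC^\infty(\mathbb{R}^d)$ to $BC^\infty(\mathbb{R}^{2d})$. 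The paper's explicit rational formula is slightly more economical if one wants pointwise formulas downstream; your integral representation is a softer argument, generalizes at once to any uniformly Lipschitz weight $f(x)=e^{\varphi(x)}$ with $\nabla\varphi\in BC^\infty$ and $|\nabla\varphi|\leq1$, and delivers the extra observation that all positive-order derivatives of $b_\epsilon$ carry a factor $\epsilon$, a feature the paper's proof also exhibits but does not dwell on.
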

\begin{proof}
 We just have to notice that
$$
f_\epsilon(x)f_\epsilon^{-1}(y)=\sum_{k=0}^\infty\frac{\big(<\epsilon x>-<\epsilon y>\big)^k}{k!}
$$
and that $<\epsilon x>-<\epsilon y>=\epsilon<x-y,b_\epsilon(x,y)>$ with 
$$
b_{\epsilon,j}(x,y):=\epsilon(x_j+y_j)\big(<\epsilon x>+<\epsilon y>\big)^{-1}
$$
evidently verifying the stated properties.
\end{proof}

\begin{lemma}\label{L-3_3}
 Suppose verified the hypothesis in Lemma \ref{L-3_1} and \ref{L-3_2}. We denote by $\epsilon_0:=\min\{1,\delta/4\}$ and for any $\epsilon\in(0,\epsilon_0]$ we define a function 
$$
c_\epsilon:\mathbb{R}^{2d}\times\mathbb{R}^d\rightarrow\mathbb{C},\quad c_\epsilon(x,y,\eta):=\widetilde{a}\left(\frac{x+y}{2},\eta+i\epsilon b_\epsilon(x,y)\right),\quad\forall(x,y,\eta)\in\mathbb{R}^{3d}.
$$
Then $c_\epsilon\in S^m(\mathbb{R}^{2d}\times\mathbb{R}^d)$ uniformly with respect to $\epsilon\in(0,\epsilon_0]$ and the following equality is true
$$
f_\epsilon\mathfrak{Op}^A(a)f_\epsilon^{-1}=\mathfrak{E}(c_\epsilon),\quad\epsilon\in(0,\epsilon_0].
$$
\end{lemma}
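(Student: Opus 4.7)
The plan is to start from the oscillatory-integral representation of $\mathfrak{Op}^A(a)$, insert the ratio $f_\epsilon(x)f_\epsilon^{-1}(y)$ into the amplitude, expand it by Lemma \ref{L-3_2} as a power series in $(x-y)$, and then rewrite each monomial $(x-y)^\alpha$ as $D_\eta^\alpha$ acting on the exponential and integrate by parts in $\eta$. The emerging series is precisely the Taylor expansion of $\widetilde{a}(\tfrac{x+y}{2},\cdot)$ around $\eta$, evaluated at the complex point $\eta + i\epsilon b_\epsilon(x,y)$; since $|\epsilon b_{\epsilon,j}(x,y)| \le \epsilon \le \delta/4 < \delta$ by Lemma \ref{L-3_2}\,(2) and the choice of $\epsilon_0$, this shifted point lies in $D_\delta$ and the analyticity of $\widetilde{a}$ given by Hypothesis \ref{Hyp-symb-olo} identifies the sum with $c_\epsilon(x,y,\eta)$.

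The first step is to verify $c_\epsilon \in S^m(\mathbb{R}^{2d}\times\mathbb{R}^d)$ uniformly in $\epsilon\in(0,\epsilon_0]$. Setting $\xi := \epsilon b_\epsilon(x,y)$, one has $|\xi_j| < \delta$, so Hypothesis \ref{Hyp-symb-olo}\,(2) provides the bound $|(\partial^\beta_x \partial^\gamma_\eta \widetilde{a})(x, \eta + i\xi)| \le C_{\beta,\gamma}<\eta>^{m-|\gamma|}$ uniformly in $\xi$ in that range. Differentiating $c_\epsilon$ in $x$ and $y$ by the chain rule produces either derivatives falling on the midpoint $\tfrac{x+y}{2}$, absorbed by the $S^m$-bound on $\widetilde{a}$, or derivatives of the components $b_{\epsilon,j}$, bounded uniformly in $\epsilon$ by Lemma \ref{L-3_2}\,(1); Leibniz then yields the full family of $S^m$ seminorm bounds.

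The second step is the identity itself. Acting on $u\in\mathcal{S}(\mathbb{R}^d)$ and regularizing the oscillatory integral by a cut-off $\chi_\tau(\eta) = \chi(\tau\eta)$ as in the proof of Lemma \ref{L-2_1}, I substitute the expansion of Lemma \ref{L-3_2} and use $(x-y)^\alpha e^{i<x-y,\eta>} = D_\eta^\alpha e^{i<x-y,\eta>}$; $|\alpha|$ integrations by parts in $\eta$ then convert $(x-y)^\alpha$ into $i^{|\alpha|}\partial^\alpha_\eta$ acting on the amplitude. The amplitude of the result is the formal series $\sum_\alpha \frac{(i\epsilon b_\epsilon(x,y))^\alpha}{\alpha!}(\partial^\alpha_\eta a)(\tfrac{x+y}{2},\eta)$; by the analyticity hypothesis this sums to $\widetilde{a}(\tfrac{x+y}{2},\eta + i\epsilon b_\epsilon(x,y)) = c_\epsilon(x,y,\eta)$. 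The Cauchy-type bound in Lemma \ref{L-3_1} estimates the $\alpha$-th term by $C(2\epsilon/\delta)^{|\alpha|}<\eta>^m$, a geometric series with ratio at most $1/2$, providing a uniform $\eta$-dependent majorant that legitimizes the interchange of summation and regularized integration; passing to the limit $\tau \searrow 0$ in the distributional sense completes the identification of the kernel with that of $\mathfrak{E}(c_\epsilon)$.

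The main obstacle will be this last interchange of summation and oscillatory $\eta$-integral: absolute convergence of the integrand only holds after the cut-off $\chi_\tau$ is inserted, and one must check that the dominated-convergence argument commutes with the subsequent distributional limit $\tau \searrow 0$. The crucial cancellation is between the $\alpha!$ produced by the Cauchy estimate in Lemma \ref{L-3_1} and the $1/\alpha!$ in the Taylor coefficient, leaving the purely geometric factor $(2\epsilon/\delta)^{|\alpha|} \le 2^{-|\alpha|}$; beyond that point the argument reduces to a routine application of the $\mathfrak{E}$-amplitude calculus underlying Lemma \ref{L-2_1}.
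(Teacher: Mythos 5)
Your overall plan coincides with the paper's: insert the ratio $f_\epsilon(x)f_\epsilon^{-1}(y)$, expand it via the power series of Lemma \ref{L-3_2}, convert $(x-y)^\alpha$ into $\eta$-derivatives by integration by parts, and recognize the resulting amplitude series as the Taylor series of $\widetilde{a}$ at the shifted argument, controlled by the Cauchy estimate of Lemma \ref{L-3_1}. The first part (the uniform $S^m$ bound on $c_\epsilon$) is correctly argued by the chain rule, Leibniz, the uniform $S^m$ bounds of Hypothesis \ref{Hyp-symb-olo}\,(2) and the $BC^\infty$ bounds of Lemma \ref{L-3_2}\,(1); this matches what the paper leaves as ``straightforward computations.''

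The gap is in the treatment of the identity, and it is exactly the point you yourself flag as ``the main obstacle'' and then dismiss as ``routine.'' With the cut-off $\chi_\tau$ in place, each integration by parts in $\eta$ produces, via Leibniz, the terms $\partial^\beta\chi_\tau\cdot\partial^{\alpha-\beta}_\eta a$ with $0\neq\beta\leq\alpha$. Because $\chi$ is compactly supported and $C^\infty$ it cannot be analytic, so $\sup|\partial^\beta\chi|$ grows faster than any geometric sequence in $|\beta|$; summing over $\alpha\in\mathbb{N}^d$ the contributions $\frac{\epsilon^{|\alpha|}}{\alpha!}\binom{\alpha}{\beta}|\partial^\beta\chi_\tau||\partial^{\alpha-\beta}_\eta a|$ therefore diverges for any fixed $\tau>0$: the $\alpha!$ gained from Lemma \ref{L-3_1} is spent against the $1/\alpha!$ of the Taylor coefficient, and nothing is left to absorb the super-geometric growth of the cut-off derivatives. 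So the step ``interchange $\sum_\alpha$ with the $\eta$-integral, then let $\tau\searrow 0$'' does not go through as written, and this is not a dominated-convergence formality.

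The paper resolves exactly this difficulty by a finite splitting that your proposal is missing. Choosing $N$ with $(N-1)d>m$ and setting $I_N=\{\alpha:0\le\alpha_j\le N-1\}$, it separates the expansion of $f_\epsilon(x)f_\epsilon^{-1}(y)$ into the finite sum over $\alpha\in I_N$ (producing $S_\epsilon$, a \emph{finite} linear combination of legitimate oscillatory integrals with amplitudes $\frac{\epsilon^{|\alpha|}b_\epsilon^\alpha}{\alpha!}(i\partial_\eta)^\alpha a\in S^{m-|\alpha|}$, so no interchange is needed there at all) and the tail (producing $T_\epsilon$, where the derivatives $\partial^\alpha_\eta a$ are of order $\le m-Nd<-d$, so the $\eta$-integrand is \emph{absolutely} integrable and Fubini justifies the interchange without any cut-off). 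Only after that split does the Cauchy estimate of Lemma \ref{L-3_1}, applied to $\partial^N_{\eta_1}\cdots\partial^N_{\eta_d}a\in S^{m-Nd}$, provide the summable geometric majorant. You need to incorporate this splitting: without it, the argument does not close.
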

\begin{proof}
 The first statement follows by straightforward computations using the properties of $\widetilde{a}$ and $b_\epsilon$ (see Lemma \ref{L-3_2}). For the last equality in the statement let us choose $N\in\mathbb{N}$ such that $(N-1)d>m$, let us denote by
$$
I_N:=\left\{\alpha=(\alpha_1,\ldots,\alpha_d)\in\mathbb{N}^d\mid0\leq\alpha_j\leq N-1,\,1\leq j\leq d\right\}
$$
and let us rewrite the formula proved in Lemma \ref{L-3_2} as
$$
f_\epsilon(x)f_\epsilon^{-1}(y)=\sum_{\alpha\in I_N}\frac{\epsilon^{|\alpha|}b_\epsilon(x,y)^\alpha}{\alpha!}(x-y)^\alpha\,+\,(x_1-y_1)^N\cdots(x_d-y_d)^Ng_{\epsilon,N}(x,y).
$$
We notice that for any $\alpha\in\mathbb{N}^d\setminus I_N$ and for any $u\in C^\infty_0(\mathbb{R}^d)$ and any fixed $x\in\mathbb{R}^d$, the function
$$
\mathbb{R}^d\times\mathbb{R}^d\ni(y,\eta)\mapsto u(y)\big(\partial^\alpha_\eta a\big)\left(\frac{x+y}{2},\eta\right)\in\mathbb{C}
$$
is integrable. Thus, computing $f_\epsilon\mathfrak{Op}^A(a)f_\epsilon^{-1}$ by using the above formula for $f_\epsilon(x)f_\epsilon^{-1}(y)$ and integration by parts with respect to $\eta$ taking into account that
$$
(x-y)^\alpha e^{i<x-y,\eta>}=\big(-i\partial_\eta\big)^\alpha e^{i<x-y,\eta>},
$$
one obtains for any $u\in C^\infty_0(\mathbb{R}^d)$
$$
f_\epsilon\mathfrak{Op}^A(a)f_\epsilon^{-1}u=S_\epsilon u+T_\epsilon u
$$
with
\begin{equation}\label{S}
 \big(S_\epsilon u\big)(x):=\int_{\mathbb{R}^{2d}}e^{i<x-y,\eta>}\omega^A(x,y)\left[\sum_{\alpha\in I_N}\frac{\epsilon^{|\alpha|}b_\epsilon(x,y)^\alpha}{\alpha!}\big(i\partial_\eta\big)^\alpha a\left(\frac{x+y}{2},\eta\right)\right]u(y)\,dy\,\dbar\eta
\end{equation}
\begin{equation}\label{T}
 \big(T_\epsilon u\big)(x):=\int_{\mathbb{R}^d}\omega^A(x,y)u(y)\sum_{\alpha\in \mathbb{N}^d\setminus I_N}\frac{\epsilon^{|\alpha|}b_\epsilon(x,y)^\alpha}{\alpha!}\left[\int_{\mathbb{R}^d}e^{i<x-y,\eta>}\big(i\partial_\eta\big)^\alpha a\left(\frac{x+y}{2},\eta\right)\dbar\eta\right]dy.
\end{equation}
We apply Lemma \ref{L-3_1} for $\partial_{\eta_1}^N\cdots\partial_{\eta_d}^Na\in S^{m-Nd}(\mathbb{R}^d)$ and conclude the following inequality:
$$
\sum_{\alpha\in \mathbb{N}^d\setminus I_N}\frac{\epsilon^{|\alpha|}}{\alpha!}\left|b_\epsilon(x,y)^\alpha\big(i\partial_\eta\big)^\alpha a\left(\frac{x+y}{2},\eta\right)\right|\leq C_N<\eta>^{m-Nd},\quad\forall(x,y,\eta)\in\mathbb{R}^{3d},\,\forall\epsilon\in(0,\epsilon_0],
$$
where $C_N$ is a positive constant. This inequality allows to permute in \eqref{T} the sum with respect to $\alpha$ with the integral with respect to $\eta$ and thus the equality in the conclusion of Lemma \ref{L-3_3} follows after a Taylor development of the map 
$$
t\mapsto\widetilde{a}\left(\frac{x+y}{2},\eta+it b_\epsilon(x,y)\right)
$$
by using the above formulae \eqref{S} and \eqref{T}.
\end{proof}
\begin{lemma}\label{L-3_4}
 Under the assumptions of Lemma \ref{L-3_3} there exists a bounded family of symbols $\left\{r_\epsilon\right\}_{\epsilon\in(0,\epsilon_0]}\subset S^{m-1}(\mathbb{R}^d)$ such that
$$
f_\epsilon\mathfrak{Op}^A(a)f_\epsilon^{-1}u\,=\,\mathfrak{Op}^A(a)\,+\,\epsilon\mathfrak{Op}^A(r_\epsilon),\quad\forall\epsilon\in(0,\epsilon_0].
$$
\end{lemma}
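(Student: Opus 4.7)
The plan is to take the identity from Lemma \ref{L-3_3}, namely $f_\epsilon\mathfrak{Op}^A(a)f_\epsilon^{-1}=\mathfrak{E}(c_\epsilon)$ with $c_\epsilon(x,y,\eta)=\widetilde{a}\big(\tfrac{x+y}{2},\eta+i\epsilon b_\epsilon(x,y)\big)$, and to extract the linear term in $\epsilon$ by a first order Taylor expansion of $\widetilde{a}$ in its second argument. Concretely, I write
$$
c_\epsilon(x,y,\eta)\,=\,a\!\left(\tfrac{x+y}{2},\eta\right)\,+\,\epsilon\,\widetilde{s}_\epsilon(x,y,\eta)
$$
with
$$
\widetilde{s}_\epsilon(x,y,\eta)\,:=\,i\sum_{j=1}^d b_{\epsilon,j}(x,y)\int_0^1(\partial_{\eta_j}\widetilde{a})\!\left(\tfrac{x+y}{2},\,\eta+it\epsilon b_\epsilon(x,y)\right)dt.
$$
The first summand, being $a\circ L_{1/2}$, produces exactly $\mathfrak{Op}^A(a)$ under $\mathfrak{E}$ (by the very definition of $\mathfrak{Op}^A(a)$ as an oscillatory integral), so the proof reduces to showing that $\mathfrak{E}(\widetilde{s}_\epsilon)$ can be written in the form $\mathfrak{Op}^A(r_\epsilon)$ with $\{r_\epsilon\}\subset S^{m-1}(\mathbb{R}^d)$ bounded.

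The next step is to verify that $\{\widetilde{s}_\epsilon\}_{\epsilon\in(0,\epsilon_0]}$ is a bounded family in $S^{m-1}(\mathbb{R}^{2d}\times\mathbb{R}^d)$. Since $\epsilon_0\leq\delta/4$ and $|b_\epsilon|\leq 1$, the argument $\eta+it\epsilon b_\epsilon(x,y)$ stays inside $D_{\delta/2}\subset D_\delta$ for all $t\in[0,1]$, so the integrand makes sense. Hypothesis \ref{Hyp-symb-olo} tells us that $(x,\eta)\mapsto\widetilde{a}(x,\eta+i\xi)$ sits in $S^m(\mathbb{R}^d)$ uniformly for $|\xi_j|<\delta$, and hence a Cauchy estimate on a polydisc of radius $\delta/4$ (as in Lemma \ref{L-3_1}) provides, for every multi-index $(\alpha,\beta)$, a uniform bound
$$
\left|(\partial^\alpha_x\partial^\beta_\eta\partial_{\eta_j}\widetilde{a})\!\left(x,\eta+i\xi\right)\right|\,\leq\,C_{\alpha,\beta}\,\langle\eta\rangle^{m-1-|\beta|},\qquad |\xi_j|<\delta/2.
$$
Combining this with the uniform $BC^\infty$ bounds on $b_{\epsilon,j}$ from Lemma \ref{L-3_2} and the Leibniz rule (the $x,y$-derivatives falling on $b_\epsilon$ or on the inner argument of $\widetilde{a}$ only produce additional factors controlled uniformly in $\epsilon$), one obtains the $S^{m-1}$ seminorm estimates on $\widetilde{s}_\epsilon$ uniformly in $\epsilon\in(0,\epsilon_0]$.

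Finally I invoke Lemma \ref{L-2_1} (Remark \ref{Rem-L-2_1}, case $t=1/2$): this produces a unique $r_\epsilon:=\overset{\circ}{\widetilde{s}_\epsilon}\in S^{m-1}(\mathbb{R}^d)$ with $\mathfrak{E}(\widetilde{s}_\epsilon)=\mathfrak{Op}^A(r_\epsilon)$, and by the continuity statement in that lemma the family $\{r_\epsilon\}_{\epsilon\in(0,\epsilon_0]}$ is bounded in $S^{m-1}(\mathbb{R}^d)$. Putting the two pieces together yields the required identity as operators on $\mathcal{S}(\mathbb{R}^d)$.

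The main obstacle is really only technical: carefully bookkeeping the $S^{m-1}$ seminorm estimates on $\widetilde{s}_\epsilon$ uniformly in $\epsilon$, because each $x$- or $y$-derivative may hit $b_\epsilon$, the inner argument $\eta+it\epsilon b_\epsilon(x,y)$ of $\widetilde{a}$, or the midpoint $\tfrac{x+y}{2}$, so one must check that all contributions are controlled, the factors of $\epsilon$ gained from differentiating the inner argument are harmless, and the Cauchy-type gain of one order in $\eta$ is preserved throughout. Once this is done, the rest of the argument is essentially algebraic manipulation combined with the one-to-one correspondence $\mathfrak{E}\leftrightarrow\mathfrak{Op}^A$ provided by Lemma \ref{L-2_1}.
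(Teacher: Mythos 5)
Your proposal is correct and follows essentially the same route as the paper: first-order Taylor expansion of $\widetilde{a}$ in its second argument to write $c_\epsilon = a\circ L_{1/2} + \epsilon d_\epsilon$ (your $\widetilde{s}_\epsilon$ is precisely the paper's $d_\epsilon$), verification that the remainder is uniformly bounded in $S^{m-1}(\mathbb{R}^{2d}\times\mathbb{R}^d)$, and then appeal to Lemma \ref{L-2_1} to convert $\mathfrak{E}(d_\epsilon)$ into $\mathfrak{Op}^A(r_\epsilon)$. The only difference is that you spell out the uniform seminorm estimates a bit more explicitly than the paper does; this is welcome detail, not a deviation.
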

\begin{proof}
 The family of symbols $\{c_\epsilon\}_{\epsilon\in(0,\epsilon_0]}$ appearing in Lemma \ref{L-3_3} may be rewritten as
$$
c_\epsilon(x,y,\eta)\,=\,a\left(\frac{x+y}{2},\eta\right)\,+\,\epsilon d_\epsilon(x,y,\eta),\quad\forall(x,y,\eta)\in\mathbb{R}^{3d},\,\forall\epsilon\in(0,\epsilon_0]
$$
with
$$
d_\epsilon(x,y,\eta):=i\int_0^1\left<b_\epsilon(x,y),\big(\nabla_\eta\widetilde{a}\big)\left(\frac{x+y}{2},\eta+it\epsilon b_\epsilon(x,y)\right)\right>dt.
$$
It is evident that $d_\epsilon\in S^{m-1}(\mathbb{R}^{2d}\times\mathbb{R}^d)$ uniformly with respect to $\epsilon\in(0,\epsilon_0]$. Thus, due to Lemma \ref{L-2_1} there exists a bounded family $\left\{r_\epsilon\right\}_{\epsilon\in(0,\epsilon_0]}\subset S^{m-1}(\mathbb{R}^d)$ such that $\mathfrak{E}(d_\epsilon)=\mathfrak{Op}^A(r_\epsilon)$. The equality in the conclusion of the Lemma now clearly follows from the one in Lemma \ref{L-3_3} and the above formula of $c_\epsilon$.
\end{proof}

\begin{proof_} {\sf of Theorem \ref{Main-2}.}
 We proceed exactly as in the proof of Theorem \ref{Main-1} using Proposition \ref{P-1_1} and Lemma \ref{L-3_4}.
\end{proof_}

\section{Proof of Theorem \ref{Main-3}}

{\sf Point 1.} It is sufficient to prove that there exists $\epsilon_0\in(0,1]$ such that for the operator $H=H_A\dot{+}V$, the functions $f_\epsilon(x):=e^{<\epsilon x>}$ with any $\epsilon\in(0,\epsilon_0]$, verify the conditions (1)-(3) in Hypothesis \ref{Hyp-weight}.

The operator $H_A$ verifies the hypothesis of Theorem \ref{Main-2} for $a(x,\xi):=<\xi>$ and $m=1$. Due to Lemma \ref{L-3_4} there exists $\epsilon_0\in(0,1]$ and a bounded family of operators $\{R_\epsilon\}_{\epsilon\in(0,\epsilon_0]}$ in $\mathbb{B}\big(L^2(\mathbb{R}^d)\big)$ such that for any $u\in D(H_A)=\mathcal{H}^1_A(\mathbb{R}^d)$ one has $f_\epsilon^{-1}u\in D(H_A)$ and
$$
f_\epsilon H_Af_\epsilon^{-1}u=H_Au+\epsilon R_\epsilon u,\quad\forall\epsilon\in(0,\epsilon_0].
$$
Thus assumtion (1) in Hypothesis \ref{Hyp-weight} is trivialy satsified and we have to prove assumptions (2) and (3).

Let us choose $u\in D(H)$ and $\epsilon\in(0,\epsilon_0]$. It follows that $u\in D(h)$ and thus $u\in\mathcal{H}^{1/2}_A(\mathbb{R}^d)$ and $V_+u\in L^2(\mathbb{R}^d)$. Then we also have $V_+f_\epsilon^{-1}u\in L^2(\mathbb{R}^d)$ and due to the fact that $f_\epsilon^{-1}\in S^0(\mathbb{R}^d)$ we also obtain $f_\epsilon^{-1}u\in\mathcal{H}^{1/2}_A(\mathbb{R}^d)$. We conclude that $f_\epsilon^{-1}u\in D(h)$. The above equality then implies
$$
\mathfrak{Op}^A(a)\big(f_\epsilon^{-1}u\big)+V\big(f_\epsilon^{-1}u\big)=f_\epsilon^{-1}\left[\mathfrak{Op}^A(a)u+Vu+\epsilon R_\epsilon u\right]\in L^2(\mathbb{R}^d),\quad\forall\epsilon\in(0,\epsilon_0].
$$
We conclude that $f_\epsilon^{-1}u\in D(H)$ and the above equality just implies assumption (3) in Hypothesis \ref{Hyp-weight}.

{\sf Point 2.} For $V_-\in\mathcal{K}_d$ the operator $H\equiv H(A,V)$ has the following two properties proved in \cite{IMP2}:
\begin{enumerate}
 \item For any $u\in L^2(\mathbb{R}^d)$ and for any $t\geq0$ the following inequality is true:
$$
\left|e^{-tH}u\right|\leq e^{-tH(0,-V_-)}|u|,\ \text {a.e. on }\mathbb{R}^d.
$$
\item The operator $e^{-H(0,-V_-)}$ has an integral kernel that verifies the estimations:
$$
\forall p>1,\ \exists C_p>0\ \text{such that }0\leq e^{-H(0,-V_-)}(x,y)\leq C_pe^{-<x-y>/p},\ \forall(x,y)\in\mathbb{R}^{2d}.
$$
We conclude then that for $\epsilon_0\in(0,1]$ from point (1), for $u$ and $\lambda$ as in the statement of the Theorem, for $p\in(1,\epsilon_0^{-1})$, $\epsilon\in(0,\epsilon_0]$ and $x\in\mathbb{R}^d$ we have
$$
\left|f_\epsilon(x)u(x)\right|=e^\lambda f_\epsilon(x)\left|\big(e^{-H}u\big)(x)\right|\leq e^\lambda f_\epsilon(x)\left(e^{-H(0,-V_-)}|u|\right)(x)\leq
$$
$$
\leq C_pe^\lambda\int_{\mathbb{R}^d}e^{-|x-y|(p^{-1}-\epsilon)}f_\epsilon(y)|u(y)|dy\leq C_pe^\lambda\left(\int_{\mathbb{R}^d}e^{-2|z|(p^{-1}-\epsilon_0)}dz\right)^{1/2}\|f_\epsilon u\|_{L^2(\mathbb{R}^d)}.
$$
\end{enumerate}
\hfill $\blacksquare $

\noindent{\bf Acknowledgements: } RP acknowledges partial support from the ANCS Partnership Contract no. 62-056/2008.

E-mail:Viorel.Iftimie@imar.ro, Radu.Purice@imar.ro

\end{document}